 \definecolor{Color_Pedro}{rgb}{0.0, 0.75, 1.0}
\journalname{Optimization Letters}
\newcommand{\m}{\mathfrak{m}}
\newcommand{\z}{\mathfrak{z}}
\renewcommand{\L}{\mathfrak{L}}
\newcommand{\C}{\mathcal{C}}
\renewcommand{\P}{\mathbb{P}}
\newcommand{\1}{\mathds{1}}
\newcommand{\R}{\mathbb{R}}
\newcommand{\nuTheta}{\zeta_{\Theta}}
\begin{document}

\title{Differentiability and Approximation of Probability Functions under Gaussian Mixture Models\thanks{The first author was partially supported by ANID-Chile grant  Exploración  13220097. The second and third author were partially supported by Centro de Modelamiento Matem\'{a}tico (CMM), ACE210010 and FB210005, BASAL funds for
 center of excellence, as well as ANID-Chile grants MATH-AMSUD 23-MATH-17, ECOS-ANID ECOS320027, Fondecyt Regular 1220886, Fondecyt Regular 1240120,  and Exploración  13220097. Additionally, the second author was supported by ANID-Chile grants MATH-AMSUD 23-MATH-09 and Fondecyt Regular 1240335.}}
\titlerunning{Probability Functions under Gaussian Mixture Models}

\author{Gonzalo Contador \and Pedro Pérez-Aros \and Emilio Vilches}



\institute{Gonzalo Contador \at
            Universidad Técnica Federico Santa María, 
           Santiago, Chile. 
            \email{gonzalo.contador@usm.cl}  
       \and Pedro P\'{e}rez-Aros  \at
        Departamento de Ingeniería Matemática and Centro de Modelamiento Matemático  (CNRS UMI 2807), Universidad de Chile, Santiago, Chile. 
            \email{pperez@dim.uchile.cl}  
            \and 
            Emilio Vilches \at 
            Instituto de Ciencias de la Ingenier\'ia, Universidad de O'Higgins,  
            Rancagua, Chile. \\
            Centro de Modelamiento Matemático (CNRS UMI 2807), Santiago, Chile. \email{emilio.vilches@uoh.cl} 
}

\date{Received: date / Accepted: date}

\maketitle

\begin{abstract}
In this work, we study probability functions associated with  Gaussian mixture models. Our primary focus is on extending the use of spherical radial decomposition for multivariate Gaussian random vectors to the context of Gaussian mixture models, which are not inherently spherical, but conditionally so. Specifically, the conditional probability distribution, given a random parameter of the random vector, follows a Gaussian distribution, which allows us to rewrite the probability function as a tractable integrated Gaussian mixture. This assumption, together with spherical radial decomposition for Gaussian random vectors, enables us to represent the probability function as an integral over the Euclidean sphere. Using this representation, we establish sufficient conditions to ensure the differentiability of the probability function and provide an integral representation of its gradient. Furthermore, we approximate the probability function using random sampling over the parameter space and the Euclidean sphere. Finally, we present a numerical example that illustrates the advantages of this approach over classical approximations based on random vector sampling.

 \keywords{stochastic
programming \and chance constrained optimization \and probability functions \and Gaussian mixture models}
\subclass{Primary: 90C15, 65K10 \and Secondary: 62F15}
\end{abstract}

\section{Introduction}
 A chance-constrained optimization problem is a mathematical program of the form:
 \begin{equation}\label{CCO}
		\min  f(x) \text{  s.t.  } \mathbb{P}\left(  g(x,\xi)\leq 0 \right) \geq p, 
\end{equation}
where $f\colon\mathbb{R}^n \rightarrow \mathbb{R}$ is the objective function, $\xi$  is an $m$-dimensional random vector defined on a probability space, $g\colon\mathbb{R}^n \times \mathbb{R}^m \rightarrow \mathbb{R} $ represents an inequality constraint,  and $p \in (0,1)$ is a reliability parameter. In this context, a vector $x \in \mathbb{R}^n$ is feasible for the optimization problem \eqref{CCO} if and only if the random inequality   $g(x,\xi) \leq 0$ is satisfied with probability of at least $p$.   For classical monographs on optimization problems with probabilistic constraints, we refer to \cite{Stprog,Prekopa_1995}.

To efficiently compute numerical solutions for chance-constrained optimization problems, access to both the values and gradients of the probability function, denoted as $\Phi(x) := \mathbb{P}\left(  g(x,\xi)\leq 0 \right)$, is crucial. A typical procedure for computing this function involves sampling the random vector $\xi$ using standard Monte Carlo techniques and then approximating the actual probability value through a sample average. 

The \emph{Law of Large Numbers} guarantees that this approach converges towards the real probability as the number of sampled values of $\xi$ approaches infinity. However, this approach presents notable challenges, especially when the random inequality $g(x,\xi) \leq 0$ has a nonlinear structure with respect to the variable $\xi$. Such a sampling procedure introduces inaccuracies in the probability computations, complicating the solution process for optimization problem.  This complexity arises from the need not only to compute probability values but also to determine gradients of probability functions across multiple iterations of various optimization methods, ultimately making this approach impractical for addressing problem \eqref{CCO}.

Hence, there is a clear need for an efficient procedure to compute both the values and gradients of probability functions. In this context, some authors have utilized the so-called \emph{spherical radial decomposition} as an effective method to calculate the values of the probability function. This decomposition has proven to be  particularly promising for studying first-order variational information of the probability function. Specifically, it provides insights into gradients when the probability function is smooth and subgradients in the non-differentiable case. Most works related to this approach involve the analysis of \emph{elliptical symmetrical distributions}, such as multivariate Gaussian distributions (see, e.g., \cite{MR4000225,MR3881946,MR3273343,vanAckooij2022,vanAckooij_Henrion_2016} and the references therein). 

Nevertheless, there are various random phenomena for which elliptical symmetric distributions are not an adequate model. Our motivation here is to provide a framework for solving chance-constrained problems that do not exhibit an inherently spherical structure as described above for $\xi$ but do so conditionally, based on the realization of a latent variable, as in the case of Gaussian mixture models. This framework leverages known results from the spherical case and extends them to the conditionally Gaussian scenario.

Gaussian mixture models are probabilistic models used to represent normally distributed subpopulations within an overall population \cite{mclachlan2004finite}. A Gaussian mixture model with $K$ components is parameterized by a distribution of weights of non-negative mixture components $c$, the means and covariance matrix of each component, with the density given by $h(\xi)=\sum_{i =1}^K c_i h_i(\xi)$. Here, the weights satisfy $\sum_{i =1}^K c_i=1$ and can be viewed as either learned or as a mixing distribution over components, where  $c_i=\P (\xi \text{ comes from component }i)$ \cite{reynolds2009gaussian}. The function $h_i$ represents the density of a Gaussian vector with mean $\mu_i$ and covariance matrix $\Sigma_i$. Extensions of this model to a combination of uncountably many Gaussian distributions, weighted by a probability density, are well-established in \cite{infinitegaussian}.
 
 The introduction of a mixture framework to account for uncertainty in $\mathbb{P}$, by treating it as a mixture of distributions,  according to a mixing distribution $\m$ over the set of probability measures $\mathcal{P}$, which also allows practitioners to incorporate their beliefs about the overall random structure of the problem, is a recent advancement in chance-constrained optimization. Using standard Bayesian analysis tools, authors have employed specific conjugate structures to derive closed-form posterior constraint distributions in optimization problems related to staffing \cite{aktekin2016stochastic}, hydraulic engineering \cite{nima2015hydraulic}, and portfolio allocation \cite{Lai2011finance}. More recently, approximations to posterior distributions have been developed and shown to be asymptotically optimal and consistent in a statistical sense \cite{jaiswal2023Bayesian}. Moreover, recent contributions by  \cite{pmlr-v108-kirschner20a} and \cite{shapiro2023bayesian}  have even developed the so-called Bayesian Optimization framework,  using observations of random phenomena $\xi_1, \ldots, \xi_n$ to approximate the true constraint distribution. 
 
 In most cases, however, the decision $x$ (a ‘here-and now-decision’) must be made before the random phenomena are observed. For example, in \cite{GonzalezDynamic}, the design of a mechanical structure (encoded by $x$), which is constructed once and permanently, must withstand future random forces $\xi$ with high probability.

  Our proposal leverages a mixture formulation of the constraint probability to derive a tractable representation of the probability function. This approach transforms the original probability function into an expectation over radial conditional expectation functions. Our method facilitates the study of the differentiability of the probability function by combining classical calculus tools with the spherical radial decomposition technique. Additionally, it improves the estimation of the true probability function, reducing its variance when compared to a standard Monte Carlo approach, and enables estimation of its gradient through stable empirical approximations  based on sampling from the parameter set and the unit sphere (spherical radial decomposition).

The paper is organized as follows. In Section \ref{Sect_Notation}, we introduce the main notation used throughout the work and provide the mathematical formulation of our approach. In Section \ref{Sect_Gradient_ofProbability}, we examine the differentiability of the probability function under Gaussian mixture models and present an integral representation using the spherical radial decomposition (see Theorem \ref{thm:gradient}). Section \ref{SectAprx_prob_MC} is dedicated to formally establishing that an approximation based on an i.i.d. sample over the set of parameters and the $m$-dimensional unit sphere, with ad hoc probability distributions, converges uniformly over compact sets under mild assumptions, both in terms of function values and gradient information (see Theorem \ref{thm:limitecontinuo}). In Section \ref{Section_Nume}, we illustrate our results with a numerical example. For clarity, the proofs of the main results are presented in the appendix.
 
\section{Notation and Preliminary}\label{Sect_Notation}
In this paper, we adopt the standard notation commonly used in variational analysis, optimization, statistics, and probability theory. For more details and comprehensive coverage, we refer the reader to the monographs \cite{Rockafellar_Wets_2009,MR3823783,Stprog,Prekopa_1995,Bill86}.

	\subsection{Normal Distribution and Spherical Radial Decomposition}\label{SECTION_SRD_GAUSSIAN}
	
	It is known that if a random vector $\xi \in \mathbb{R}^m$ has a (multivariate) normal distribution $\xi \sim \mathcal{N}(\mu,\Sigma)$, then $\xi$ admits a so-called \emph{spherical radial decomposition} representation, given by
	\begin{equation*}
		\xi=\mu+RL\Theta,
	\end{equation*}
	where $R$ follows a $\chi$-distribution with $m$-degrees of freedom, $\Theta$ follows a uniform distribution $\nuTheta$ over the $m$-dimensional unit sphere $\mathbb{S}^{m-1}:=\{ z\in \mathbb{R}^m :
	\sum_{i=1}^m z_i^2=1 \}$, and $L$ is the (unique) Cholesky decomposition of $\Sigma$, such that $LL^T=\Sigma$.  

 Now, given a probability function $\varphi(x):= \mathbb{P}( g(x,\xi) \leq 0)$, for $\xi \sim \mathcal{N}(\mu,\Sigma)$,  the above decomposition allows us to rewrite $\varphi$ as \begin{equation*}
		\varphi(x) = \int_{\mathbb{S}^{m-1}}  	e(x,v)  d\nuTheta(v),
	\end{equation*}
	where $\nuTheta$ is the uniform measure on $\mathbb{S}^{m-1}$, and $e(x,v)$ represents the probability of satisfying the constraint for a given $x$ when $\Theta=v$ is fixed. Specifically,  
	\begin{equation}
		e(x,v) = \int_{\{r \geq 0 \; : g(x,\mu + rLv) \leq 0\} } \chi (r)dr,
		\label{kernel}
	\end{equation}
	where  $\chi$ is the density of a $\chi$-distribution with $m$-degrees of freedom, defined by
	\begin{align}\label{density-like} 
		\chi(r):=   \frac{1
		}{2^{m/2-1}\Gamma(\frac{m}{2})}  r^{m-1}\exp\left(-\frac{r^2}{2}\right) \quad \textrm{ for } r\geq 0.
	\end{align}
For further details, see \cite{MR3273343,vanAckooij_Henrion_2016} and the references therein.
  
 \subsection{Gaussian Mixture Models}

 A finite Gaussian mixture model in $\mathbb{R}^m$ can be written as a density $p$ of the form 
 \begin{equation*}
     \xi \sim p(\,\cdot\,|\mu_1, \ldots ,\mu_k, \Sigma_1, \ldots, \Sigma_k, w_1, \ldots , w_k)=\sum_{i=1}^k w_i\mathcal{N}(\mu_i,\Sigma_i),
 \end{equation*}
where $k\in \mathbb{N}$ is a fixed integer, $\mu_1, \ldots ,\mu_k$ are the means and $\Sigma_1, \ldots, \Sigma_k$ are the covariances of $k$ Gaussian distributions, and $w_1, \ldots, w_k$ are the mixing proportions, which are positive and sum to 1. In this way, the model can be interpreted as $\xi$ having a multivariate normal distribution with parameters $\mu_k$ and $\Sigma_k$ with probability $w_k$. 

To extend the mixture model beyond a finite number of components,  the authors in \cite{infinitegaussian} assume that vector $\mu$ and positive definite matrix $\Sigma$ are themselves random elements following specific choices of continuous distributions, so that the vector $\xi \sim \mathcal{N}(\mu,\Sigma)$ has parameters that can be chosen among an uncountably infinite set of possibilities. Subsequent work also incorporates uncertainty quantification on the number of categories $K$ \cite{matza2021infinite}, allowing the number of mixture components to be countably infinite. We will generalize the Gaussian Mixture Model so that $\mu$ and $\Sigma$ are selected according to a density $\m$ over a subset of feasible means and covariance matrices, specifically, 

 \begin{equation*}
     \xi|\mu, \Sigma \sim \mathcal{N}(\mu,\Sigma) \text{ and } (\mu, \Sigma) \sim \m.
 \end{equation*}
 This formulation will be detailed in Section \ref{sec:theory}. 
 
	\subsection{Mathematical Formulation and Spherical Radial decomposition for    Gaussian Mixture Models}\label{sec:theory}
  Let us consider  a probability space $( \Omega, \mathcal{S},\mathbb{P})$, a random vector $(\xi, C)\colon \Omega \to \Xi \times \C \subset \mathbb{R}^{m+p}$, and a set of conditional probability  measures (uncertainty set) $\mathcal{P}:=\{ \mathbb{P}_c (A) = \mathbb{E}( 1_{A}|C =c ) : c\in \C  \}$ over $\Xi$, where the parameter space $\C\subset \R^p$ is a Borel measurable set. We denote by $\m$ the measure induced by the second coordinate of $(\xi, C)$ over  the Borel $\sigma$-algebra  $\mathcal{B}(\C)$, that is, 
	\begin{equation}
		\label{eq:prior}
		\m(A)=\mathbb{P}\left( \omega \in \Omega : C(\omega) \in A \right).
	\end{equation}
 This mathematical framework is interpreted as follows: the random element $\xi$ in the chance constraint has a distribution parameterized by $c \in \C$, and $\m$ is a  mixing distribution on $\C$. Therefore, the chance constraint in \eqref{CCO} can be written as
	\begin{equation*}
		\mathbb{P}(g(x,\xi) \leq 0) =\int_\mathcal{C} \mathbb{P}_c(g(x,\xi) \leq 0)d\m(c).
	\end{equation*}
	Hence, problem \eqref{CCO} can be rewritten as:
	\begin{align*}
		\min  f(x) \text{ s.t. } \mathbb{P} (g(x,\xi))\leq 0 )=\int_\mathcal{C}\mathbb{P}_c(g(x,\xi) \leq 0)d\m(c)  \geq p.
	\end{align*}
 We observe that the above formulation coincides with \eqref{CCO} when the distribution $\mathbb{P}$ of $\xi$ is fully specified, i.e., when the uncertainty set $\mathcal{P}$ reduces to a single element $\mathbb{P}$ (and, consequently, $\m=\delta_\mathbb{P}$). In the context of an infinite Gaussian mixture model, we consider a probability space $(\C, \mathcal{F},\m)$ where the random variable $C$ has probability distribution $\m$ as defined in \eqref{eq:prior}, which acts like a mixing distribution over the components of the Gaussian mixture, each of them characterized by a mean and a covariance matrix, as in \cite{infinitegaussian}. Therefore, to describe the Gaussian components, we consider measurable functions $\z \colon \C \to \mathbb{R}^m$ and $  \L \colon \C \to \mathbb{R}^{m\times m}$ such that
	\begin{enumerate}[label=\alph*)]
		\item For each $c\in \C$, conditional on the event $C=c$,  the vector $\xi \sim \mathcal{N}(\z(c),\Sigma(c))$, i.e., 

		\begin{align*}
			\mathbb{P}_c( \xi \in A):=\mathbb{P}( \xi \in A | C=c)=   \int_A h_c(z) dz \textrm{ for all Borel set } A\subset \mathbb{R}^m,
		\end{align*}
  where \begin{align*}
		h_c(z):=	\frac{1}{ \sqrt{(2\pi)^m\det \Sigma(c) }}  \exp\left( - \frac{1}{2} (z- \z(c))^{\top} \Sigma^{-1}(c) (z- \z(c))\right).
		\end{align*}
  We refer to \cite{MR4000225,MR3881946} for more details. 
  \item There exists $ \eta_0>0$ such that
  \begin{equation}\label{eqeta0}
      \|\z(c)\|_2 + \|\L(c)\|_M \leq \eta_0 \text{  for all }c\in \C, 
  \end{equation}
  where $ \L(c)$ is the Cholesky decomposition of the positive definite matrix $\Sigma(c)$, i.e., $\Sigma(c):= \L(c) ^\top \L(c)$, $\|\cdot\|_2$ denotes the Euclidean norm on $\mathbb{R}^m$ and $\|\cdot\|_M$ denotes the matrix norm on the space of $m \times m$ matrices, i.e., $\|A\|_M=\sup\{\|Ax\|_2: \|x\|_2=1\}$.
	\end{enumerate}
	Hence, the expression $h_c(\xi)$ can be understood as the conditional density of $\xi$ given $C=c$, which allows us to define the conditional probability function as 
	\begin{align}\label{eq:condProb}
		\varphi_c(x):=\P_c(g(x,\xi)\leq 0)=\mathbb{P}(g(x,\xi)\leq 0|C=c)= \int_{ \{ z: g(x,z) \leq 0   \} } h_c( z)dz.
	\end{align}
      Condition a) guarantees that $\varphi_c$ is well defined for almost every $c \in \C$ (see, e.g., \cite{Faden1985Conditional}), and from \eqref{eq:condProb}, it follows that  
	\begin{align*}
		\P(g(x,z)\leq 0)&=\int_\C \P_c(g(x,\xi)\leq 0)d\m(c)= \mathbb{E}(\varphi_C(x)).
	\end{align*} 	
 It should be emphasized that the probability defined on \eqref{eq:condProb} can be interpreted as the probability of satisfying the constraint, calculated after randomly selecting a Gaussian distribution according to $\m$.

	Applying the \emph{spherical radial decomposition} on the event $C=c$ for a Gaussian random variable $\xi$, as described in Section \ref{SECTION_SRD_GAUSSIAN}, we obtain the following parameterized decomposition: 
	\begin{align*}
		\xi|\{C=c\}= \z(c) +R\L(c) \Theta.
	\end{align*}
	Moreover, it is clear that $(\Theta, R)$ is independent with $\Theta$ uniform over the sphere $\mathbb{S}^{m-1}$ and  $R$ follows a $\chi$-distribution with $m$ degrees of freedom. 
 
	 The conditional probability function \eqref{eq:condProb} can be expressed as 
	\begin{align*}
		\varphi_c(x) = \int_{\mathbb{S}^{m-1}}  e_c( v, x) d\nuTheta(v),
	\end{align*}
	where $e_c(\cdot,\cdot) := e(c,\cdot,\cdot)$ and $e \colon \C \times \mathbb{S}^{m-1}\times  \mathbb{R}^n  \to \mathbb{R}\cup
	\{ +\infty\}$ is the  parametric \emph{radial probability} function in \eqref{kernel} on the event $C=c$ given by
	\begin{align}\label{function_e}
		e_c(x,v) = 
  \displaystyle  \int\limits_{  \{ r\geq 0 :
			g(x, r\L(c)v+ \z(c))     \leq 0   \}      }   \chi( r)
		dr.
	\end{align}
In what follows, we say that $\bar{x}$ is a $\z$-\emph{uniform Slater point} if there exists an open neighborhood $U$ of $\bar{x}$  and  $\gamma_0 >0$ such that 
	\begin{equation}
		\label{assumption1}
			g(x,\z(c)) < -\gamma_0,  \text{ for all }x\in U \text{ and }\m \text{-almost all }c\in \C.
	\end{equation}

\section{Gradient of Probability Functions under Gaussian Mixture Models}\label{Sect_Gradient_ofProbability}
In this section, we study the following (unconditional) probability function: 
\begin{equation}\label{exp:Prob}
	\Phi(x):=\mathbb{E}(\varphi_C(x))=\int_{ \C }\varphi_c(x)d\m(c),
\end{equation}
where $\varphi_c(x)$ is defined on  \eqref{eq:condProb}. 

The following result, proven in the appendix, establishes that the above probability function is well-defined.
\begin{proposition} \label{prop:econtinua}
Assume that $\z$ and $\Sigma$ are measurable and that $g\colon \mathbb{R}^n \times \mathbb{R}^m \to \mathbb{R}$ is measurable. Then, the parametric radial probability function $e \colon \C \times \mathbb{S}^{m-1}\times \mathbb{R}^n \to \mathbb{R}\cup
	\{ +\infty\}$, defined in \eqref{function_e}, is $\mathcal{F}\otimes \mathcal{B}(\mathbb{S}^{m-1}) \otimes \mathcal{B}(\mathbb{R}^n)$ measurable. Furthermore, for any $v \in \mathbb{S}^{m-1}$ and $\m$-almost every $c \in \C$, the function $e_c(\cdot,v)$ is continuous at any $\z$-uniform Slater point \eqref{assumption1}.
\end{proposition}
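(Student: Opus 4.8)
The statement splits into an independent measurability claim and a continuity claim, and I would handle them separately. For measurability, the plan is to introduce the auxiliary map $\Psi(c,v,x,r):=g\bigl(x,\,r\L(c)v+\z(c)\bigr)$ on $\C\times\mathbb{S}^{m-1}\times\mathbb{R}^n\times[0,\infty)$. Since $g$ is continuous (a standing assumption) and $\z$, $\Sigma$ (hence $\L$) are measurable, $\Psi$ is measurable in $c$ and jointly continuous in $(v,x,r)$, i.e., a Carath\'{e}odory function, and is therefore $\mathcal F\otimes\mathcal B(\mathbb S^{m-1})\otimes\mathcal B(\mathbb R^n)\otimes\mathcal B([0,\infty))$-measurable. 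Consequently the sublevel set $A:=\{(c,v,x,r):\Psi(c,v,x,r)\le 0\}$ belongs to that product $\sigma$-algebra; writing $e_c(x,v)=\int_0^{\infty}\mathbf{1}_A(c,v,x,r)\,\chi(r)\,dr$ and noting that $\chi(r)\,dr$ is a finite measure on $[0,\infty)$, Tonelli's theorem applied to the nonnegative integrand $\mathbf{1}_A\,\chi$ yields that $(c,v,x)\mapsto e_c(x,v)$ is $\mathcal F\otimes\mathcal B(\mathbb S^{m-1})\otimes\mathcal B(\mathbb R^n)$-measurable.

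\textbf{Continuity via a semicontinuity sandwich.} For the continuity assertion, I would fix $(c,v)$ and a $\z$-uniform Slater point $\bar x$, with neighborhood $U$ and constant $\gamma_0>0$, and abbreviate $\psi_x(r):=g\bigl(x,r\L(c)v+\z(c)\bigr)$, so that $e_c(x,v)=\int_0^{\infty}\mathbf{1}\{\psi_x(r)\le 0\}\,\chi(r)\,dr$ and $\psi_x(0)=g(x,\z(c))<-\gamma_0$ for every $x\in U$. For any sequence $x_k\to\bar x$ in $U$, continuity of $g$ gives the pointwise bounds $\limsup_k\mathbf{1}\{\psi_{x_k}(r)\le 0\}\le\mathbf{1}\{\psi_{\bar x}(r)\le 0\}$ and $\liminf_k\mathbf{1}\{\psi_{x_k}(r)\le 0\}\ge\mathbf{1}\{\psi_{\bar x}(r)<0\}$ for all $r\ge 0$. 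Since $0\le\mathbf{1}\{\psi_{x_k}(r)\le 0\}\chi(r)\le\chi(r)$ and $\int_0^{\infty}\chi=1$, the reverse Fatou lemma gives $\limsup_k e_c(x_k,v)\le e_c(\bar x,v)$ while Fatou's lemma gives $\liminf_k e_c(x_k,v)\ge\int_0^{\infty}\mathbf{1}\{\psi_{\bar x}(r)<0\}\,\chi(r)\,dr$. These two estimates pinch $e_c(x_k,v)$ to $e_c(\bar x,v)$ precisely when the set of boundary radii $Z:=\{r\ge 0:\psi_{\bar x}(r)=0\}$ is Lebesgue-null.

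\textbf{Main obstacle and closing step.} The crux is thus to verify that $Z$ has Lebesgue measure zero; this is the only step needing more than continuity of $g$ and the Slater property (indeed it can fail for a merely continuous $g$ whose section $r\mapsto\psi_{\bar x}(r)$ is flat at level $0$), and it is where the ambient structural hypotheses on $g$ enter. In the standard convex chance-constrained setting, where $g(x,\cdot)$ is convex, $\psi_{\bar x}$ is a convex one-dimensional function with $\psi_{\bar x}(0)<0$, so $\{r\ge 0:\psi_{\bar x}(r)\le 0\}$ is an interval $[0,\rho]$ with $\rho\in(0,+\infty]$ on which $\psi_{\bar x}$ is strictly negative except possibly at $r=\rho$, whence $Z\subseteq\{\rho\}$ is null. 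In that case one may argue even more directly: $e_c(x,v)=F_\chi\bigl(\rho_c(x,v)\bigr)$, where $F_\chi$ is the continuous cdf of the $\chi$-distribution and $\rho_c(x,v):=\sup\{r\ge 0:g(x,r\L(c)v+\z(c))\le 0\}$ is the first-exit radius; lower semicontinuity of $\rho_c(\cdot,v)$ at $\bar x$ follows from $\psi_{\bar x}<0$ on $[0,\rho)$ together with the uniform Slater inequality, and upper semicontinuity from $\psi_{\bar x}>0$ just beyond $\rho$, both transferred to nearby $x$ by continuity of $g$. Either way, once $Z$ is null the two semicontinuity bounds collapse and yield $\lim_k e_c(x_k,v)=e_c(\bar x,v)$, which is the claim; the same conclusion holds under any hypothesis on $g$ guaranteeing that the level set $\{z:g(\bar x,z)=0\}$ meets the ray $r\mapsto r\L(c)v+\z(c)$ in a Lebesgue-null set of radii.
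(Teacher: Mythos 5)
Your proposal is correct and follows essentially the same route as the paper: measurability via joint measurability of $(c,v,x,r)\mapsto \chi(r)\mathbf{1}\{g(x,r\L(c)v+\z(c))\le 0\}$ plus Fubini--Tonelli, and continuity at a $\z$-uniform Slater point via a dominated-convergence argument in which convexity of $g(x,\cdot)$ together with $g(x,\z(c))<-\gamma_0$ forces the zero set of $r\mapsto g(\bar x,r\L(c)v+\z(c))$ to be at most a singleton. Your Fatou/reverse-Fatou sandwich simply spells out the almost-everywhere convergence of the indicators that the paper dismisses as ``straightforward,'' so it is the same argument with that step made explicit.
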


Before establishing the differentiability of the probability function defined in   \eqref{exp:Prob}, we need to introduce a condition that allows us to control the gradient of $g$ locally at $\bar x$, but globally on $(z, c) \in \mathbb{R}^m \times  \C $. In our GMM setting, we consider the so-called exponential growth condition, first introduced in \cite{MR3273343}. Without this condition,  differentiability may fail even for  Gaussian distributions  (see, e.g.,  \cite{MR3273343,zbMATH07047588}).

\begin{definition}[Exponential growth condition]\label{def:growthcon}
 We say that a function $g$ satisfies the exponential growth condition at $\bar{x}$ if one can find constants  $\varepsilon,a,b>0$ such that 
	\begin{equation*}
		\| \nabla_x g(x,z ) \| \leq   a\exp(b\| z\|  )  \text{ for all } (x,z) \in 
		\mathbb{B}_{\varepsilon } (\bar{x}) \times \mathbb{R}^m.
	\end{equation*}
\end{definition}
The following result proves the continuous differentiability of the probability function \eqref{exp:Prob} and provides an integral formula for its gradient.   
\begin{theorem} \label{thm:gradient}
	Let $g\colon \mathbb{R}^n\times \mathbb{R}^m \to \R$ be a continuously differentiable function satisfying the exponential growth condition at $\bar{x}$. Assume that $g$ is convex in the second argument and $\bar{x}$ is a $\z$-uniform Slater point as in \eqref{assumption1}. Then, there exists a neighborhood $U'$ of  $\bar{x}$ such that the probability function $\Phi$ defined in \eqref{exp:Prob} is continuously differentiable on $U'$, and 
	\begin{align}\label{Formula_gradient_PHI}
		\nabla \Phi(x) = \int_{   \mathbb{S}^{m-1} \times \C } \nabla_x e_c (x,v) d\nuTheta(v)  d\m (c), \quad  \text{ for all } x\in U'.
	\end{align}
\end{theorem}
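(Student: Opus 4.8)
The plan is to differentiate under the integral sign twice: once to pass $\nabla$ inside the outer integral over $\C$ (using $\m$), and once to pass $\nabla$ inside the inner integral over $\mathbb{S}^{m-1}$ (using $\nuTheta$). The key enabling fact is that for each fixed $c\in\C$, the hypotheses of the theorem reduce to the classical spherical-radial setting for a single Gaussian $\mathcal{N}(\z(c),\Sigma(c))$: indeed, a $\z$-uniform Slater point $\bar x$ is in particular a Slater point for each conditional constraint $g(x,\z(c))<-\gamma_0$, and the exponential growth condition on $g$ is a hypothesis on $g$ alone. Hence the existing differentiability results for Gaussian probability functions via spherical radial decomposition (e.g.\ \cite{MR3273343,MR4000225}) apply to each $\varphi_c$, yielding that $\varphi_c$ is $C^1$ on a common neighborhood $U$ of $\bar x$ with
\begin{equation*}
	\nabla\varphi_c(x) = \int_{\mathbb{S}^{m-1}} \nabla_x e_c(x,v)\, d\nuTheta(v).
\end{equation*}
The crucial point is that the neighborhood $U$ and the attendant bounds can be chosen \emph{uniformly in $c$}, because the only $c$-dependent data are $\z(c)$ and $\L(c)$, and condition b) (equation \eqref{eqeta0}) bounds these uniformly by $\eta_0$, while \eqref{assumption1} gives a $c$-independent Slater margin $\gamma_0$ on a $c$-independent neighborhood $U$.

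Next I would assemble a dominating function. From the explicit form \eqref{function_e} of $e_c$ and the chain rule applied through the radial variable, $\nabla_x e_c(x,v)$ can be written (as in the Gaussian case) as a boundary term of the form $\chi(\rho_c(x,v))\,\nabla_x\rho_c(x,v)$ where $\rho_c(x,v)$ is the (largest) radius at which the ray hits the boundary of the feasible set, plus possibly an interior term; convexity of $g(x,\cdot)$ guarantees the feasible radial set is an interval $[0,\rho_c(x,v)]$ (or all of $[0,\infty)$), so there is a single active radius. The exponential growth condition bounds $\|\nabla_x g(x,z)\|\le a\exp(b\|z\|)$, and on the ray $z=\z(c)+r\L(c)v$ one has $\|z\|\le \eta_0(1+r)$, so $\|\nabla_x g\|\le a e^{b\eta_0}e^{b\eta_0 r}$; multiplying by $\chi(r)$ and using that $\chi(r)$ decays like $r^{m-1}e^{-r^2/2}$ kills the exponential in $r$, producing a bound on $\|\nabla_x e_c(x,v)\|$ that is integrable in $r$ and, importantly, independent of $(c,v,x)$ for $x\in U$. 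Implicit differentiation of the defining relation $g(x,\z(c)+\rho_c\L(c)v)=0$ expresses $\nabla_x\rho_c$ in terms of $\nabla_x g$, $\nabla_z g$, and $\L(c)v$; the Slater condition ensures the denominator $\nabla_z g(x,z)^\top\L(c)v$ stays bounded away from zero in the relevant regime (this is where the classical argument is invoked rather than redone), so the implicit function theorem applies uniformly.

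With a uniform integrable dominating function $G(c,v)$ (in fact a constant times an integrable function of $v$, hence $\nuTheta\otimes\m$-integrable since $\m$ and $\nuTheta$ are probability measures) and with $x\mapsto \nabla_x e_c(x,v)$ continuous on $U$, the dominated convergence / differentiation-under-the-integral theorem (e.g.\ \cite{Bill86}) gives that $\Phi=\int_\C\varphi_c\,d\m(c)$ is $C^1$ on a possibly smaller neighborhood $U'\subseteq U$ of $\bar x$, with $\nabla\Phi(x)=\int_\C\nabla\varphi_c(x)\,d\m(c)=\int_\C\int_{\mathbb{S}^{m-1}}\nabla_x e_c(x,v)\,d\nuTheta(v)\,d\m(c)$; Fubini's theorem, justified by the same uniform integrable bound together with the joint measurability of $e$ established in Proposition~\ref{prop:econtinua}, then combines the two integrals into the single integral over $\mathbb{S}^{m-1}\times\C$ in \eqref{Formula_gradient_PHI}. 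I expect the main obstacle to be the uniformity in $c$ at the point where the feasible radial set is shown to be a genuine compact interval with the active-radius map $\rho_c$ differentiable: one must rule out, uniformly over $c\in\C$ and $x\in U$, the degenerate situation where the ray is tangent to the boundary (so $\nabla_z g(x,z)^\top\L(c)v=0$ at the active radius) — this is precisely what the combination of the exponential growth condition and the uniform Slater condition is designed to prevent, following \cite{MR3273343}, and carrying that bound through with constants that do not depend on $c$ is the technical heart of the argument.
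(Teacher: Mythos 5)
Your proposal follows essentially the same route as the paper: the paper's Theorem~\ref{theorem01} establishes exactly the per-$(c,v)$ continuous differentiability of $e_c$ together with a uniform gradient bound $\hat{\kappa}$ (obtained, as in your sketch, from the uniform Slater margin $\gamma_0$, the uniform bound $\eta_0$ on $\z(c)$ and $\L(c)$, the exponential growth condition, and the Gaussian decay of $\chi$), and the proof of Theorem~\ref{thm:gradient} is then precisely the differentiation-under-the-integral argument you describe, carried out directly with respect to the product measure $\nuTheta\otimes\m$ rather than iterated over $\C$ and $\mathbb{S}^{m-1}$ with Fubini. One small caution: the denominator $\langle\nabla_z g(x,\z(c)+\rho_c(x,v)\L(c)v),\L(c)v\rangle$ is not uniformly bounded away from zero --- the paper's inequality \eqref{lower_ineq} only yields the lower bound $\gamma_0/\rho_c(x,v)$ --- but the resulting extra factor of $\rho_c(x,v)$ is absorbed by the factor $\rho_c(x,v)^{m-1}e^{-\rho_c(x,v)^2/2}$, so your domination argument goes through as claimed.
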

The proof of the above result will follow directly from a more general result presented in the appendix, which provides an explicit representation for computing the integral formula in \eqref{Formula_gradient_PHI} in terms of the function $g$, along with a detailed technical analysis of the estimation process for the integrand (see Theorem \ref{theorem01}). We refer to \eqref{grad_form} for an explicit formula for $\nabla_x e_c(x,v)$ in terms of the shape of $g$ and the distribution of $\xi$.

\section{Approximation of Probability Functions via Monte Carlo Simulation} \label{SectAprx_prob_MC} 

In this section, we propose a sampling method to approximate the probability function $\Phi$ in \eqref{exp:Prob} and its gradient. As Theorem \ref{thm:gradient} guarantees the continuity of $\Phi$, our goal is to obtain an expression that remains continuous in $x$. Therefore, we avoid relying on naive i.i.d.-based Monte Carlo approximations of the form
	\begin{equation}
		\label{eq:sampleaverage}
		\tilde{\Phi}_N(x)=\frac{1}{N}\sum_{i=1}^N \1_{ \{ g(x,\xi_i)\leq 0 \}}.
	\end{equation}
 Instead, we propose a Monte Carlo approximation to estimate $\Phi(x)$ and $\nabla \Phi(x)$ that relies on an i.i.d. sample of $\{(v_i,c_i)\}_{i \in \mathbb{N}}$, which can be obtained in several standard ways (for instance, inverse transform sampling \cite{david1948probability} or the use of an asymptotic surrogate Markov Chain Monte Carlo (MCMC) method \cite{mengersen1999mcmc,Jones2004MCCLT} which, after a sufficiently large number of steps, behaves like an i.i.d. sample \cite[Theorem 7.4]{robert_MCMC}. See the above references for details on the speed/rates of convergence and the numerical example in section \ref{Section_Nume} for a detailed sampling scheme). By averaging the radial probability function \eqref{function_e} across the sample, we define the empirical spherical approximation to \eqref{eq:condProb} and its gradient \eqref{Formula_gradient_PHI} as
	\begin{equation}
		\label{eq:MCMCprob}
		\Phi_N(x)=\frac{1}{N}\sum_{i=1}^N e_{c_i}(x,v_i) \quad \text{ and } \quad \nabla\Phi_N(x)=\frac{1}{N}\sum_{i=1}^N \nabla_x e_{c_i}(x,v_i).
	\end{equation}
Hence, under mild regularity and growth conditions, the hypotheses in \cite[Theorem 9.60]{Stprog} are satisfied, leading to the following convergence theorem, which is proved in the appendix.
 	\begin{theorem}\label{thm:limitecontinuo} 
    Let $\{(v_i,c_i)\}_{i=1}^N$ be an i.i.d. sequence obtained from $\nuTheta \otimes \m$.  
    Consider a compact set $X \subset \R^n$ such that, for every $x\in X$, condition \eqref{assumption1} and the exponential growth condition hold at $x$. Then, with probability 1, the approximations $\Phi_N(x)$  and  $\nabla\Phi_N(x)$ defined in \eqref{eq:MCMCprob} converge uniformly on $X$ to $\Phi$ and $\nabla \Phi$, respectively, as $N\to \infty$. 
 \end{theorem}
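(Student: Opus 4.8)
The plan is to deduce the statement from the uniform law of large numbers for random continuous integrands, namely \cite[Theorem 9.60]{Stprog}, applied separately to the scalar integrand $(v,c)\mapsto e_c(x,v)$ and to the vector-valued integrand $(v,c)\mapsto\nabla_x e_c(x,v)$, and then to pass from the i.i.d.\ case to the Markov-chain case by replacing the classical strong law underlying that theorem with an ergodic strong law. To apply \cite[Theorem 9.60]{Stprog} near a fixed $\bar x\in X$ I must verify: (i) for each $x$ the integrand is $\mathcal F\otimes\mathcal B(\mathbb{S}^{m-1})$-measurable and, for $\nuTheta\otimes\m$-a.e.\ $(v,c)$, continuous in $x$ on a neighborhood of $\bar x$ (the Carath\'eodory property); and (ii) there is a $\nuTheta\otimes\m$-integrable function dominating $\sup_{x\in U}|e_c(x,v)|$, respectively $\sup_{x\in U}\|\nabla_x e_c(x,v)\|$, on a neighborhood $U$ of $\bar x$. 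Once (i) and (ii) hold, I cover the compact set $X$ by finitely many such neighborhoods and intersect the corresponding probability-one events.

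Ingredient (i): joint measurability of $e$ in $(c,v)$ is exactly Proposition \ref{prop:econtinua}, and joint measurability of $\nabla_x e_c(x,v)$ follows from the closed-form expression \eqref{grad_form} together with that proposition; continuity of $e_c(\cdot,v)$ and $\nabla_x e_c(\cdot,v)$ on the neighborhood $U'$ of $\bar x$ is furnished by Theorem \ref{thm:gradient} (with the detailed estimates in Theorem \ref{theorem01}), which also identifies $\int e_c(\cdot,v)\,d(\nuTheta\otimes\m)=\Phi$ and $\int\nabla_x e_c(\cdot,v)\,d(\nuTheta\otimes\m)=\nabla\Phi$ on $U'$. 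The scalar part of (ii) is immediate: since $e_c(x,v)=\int_{\{r\ge 0:\,g(x,r\L(c)v+\z(c))\le 0\}}\chi(r)\,dr\le\int_0^\infty\chi(r)\,dr=1$, the constant $1$ dominates and is trivially $\nuTheta\otimes\m$-integrable because $\mathbb{S}^{m-1}$ carries a finite measure and $\m$ is a probability.

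The substantive part is the integrable majorant for $\nabla_x e_c(x,v)$. I would use the explicit formula \eqref{grad_form}, which writes $\nabla_x e_c(x,v)$ as a sum of terms built from $\nabla_x g$ evaluated along the ray $r\mapsto r\L(c)v+\z(c)$, the density $\chi$ evaluated at the finitely many boundary radii (a single one in the convex case), and the reciprocal of the radial derivative $|\partial_r g|$ at those radii. Along the ray, $\|r\L(c)v+\z(c)\|\le\eta_0 r+\eta_0$ by \eqref{eqeta0} since $\|v\|=1$, so the exponential growth condition gives $\|\nabla_x g(x,r\L(c)v+\z(c))\|\le a e^{b\eta_0}e^{b\eta_0 r}$ for all $x\in\mathbb B_\varepsilon(\bar x)$, a bound \emph{uniform in} $(v,c)$ that integrates against $\chi(r)\,dr$ because the $\chi$-distribution has all exponential moments finite. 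The remaining factors are controlled, again uniformly in $(v,c)$, by combining convexity of $g(x,\cdot)$ with the $\z$-uniform Slater condition \eqref{assumption1}: $g(x,\z(c))<-\gamma_0$ on $U$ forces the boundary radius to lie in a fixed compact subinterval of $(0,\infty)$, keeping $\chi$ bounded there, while convexity plus the Slater gap $\gamma_0$ yields a uniform strictly positive lower bound on $|\partial_r g|$ at the boundary, preventing blow-up of its reciprocal. These are exactly the estimates already carried out in the appendix for Theorem \ref{theorem01}, which I would invoke, producing a single $\kappa\in L^1(\nuTheta\otimes\m)$ with $\sup_{x\in U'}\|\nabla_x e_c(x,v)\|\le\kappa(v,c)$.

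With (i) and (ii) in place, \cite[Theorem 9.60]{Stprog} gives, for each $\bar x\in X$, a neighborhood on which $\Phi_N\to\Phi$ and $\nabla\Phi_N\to\nabla\Phi$ uniformly, almost surely; a finite subcover of $X$ and the intersection of the finitely many probability-one events give uniform convergence on all of $X$ with probability $1$. For the surrogate Markov chain I would replace the i.i.d.\ strong law by the strong law for Harris-ergodic chains with stationary distribution $\nuTheta\otimes\m$ (e.g.\ \cite[Theorem 7.4]{robert_MCMC}, \cite{Jones2004MCCLT}): the pointwise a.s.\ convergence $\tfrac1N\sum_i\phi(v_i,c_i)\to\int\phi\,d(\nuTheta\otimes\m)$ persists for every $\phi\in L^1(\nuTheta\otimes\m)$, and the bracketing/equicontinuity step in the proof of the uniform law uses only this pointwise statement together with the integrable majorant, so the argument transfers verbatim. \emph{The main obstacle} is precisely the construction of the $L^1(\nuTheta\otimes\m)$-majorant for $\nabla_x e_c(x,v)$ that is uniform over $x$ in a neighborhood and joint in $(v,c)$: one must simultaneously absorb the exponential growth of $\nabla_x g$ into the $\chi$-tail using $\eta_0$ to make the bound $c$-independent, and keep the boundary factors $\chi(\rho)$ and $1/|\partial_r g|$ bounded, which is where the $\z$-uniform Slater condition and convexity of $g(x,\cdot)$ are essential; everything else — measurability, the covering argument, and the passage to ergodic chains — is routine once those estimates are available.
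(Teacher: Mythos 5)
Your proposal is correct and follows essentially the same route as the paper: both rest on the continuity and uniform gradient bound \eqref{gra_bound} from Theorem \ref{theorem01}, the trivial bound $e_c(x,v)\le 1$, a finite covering of the compact set $X$, and the strong law (classical or ergodic, via \cite[Theorem 7.4]{robert_MCMC}) at the finitely many centers. The only cosmetic difference is that the paper writes out the uniform-law argument explicitly through the moduli $\Delta_{j,k}$ instead of citing \cite[Theorem 9.60]{Stprog} as a black box — precisely so that, as you also observe, the argument transfers verbatim to the MCMC sampler — and its majorant is in fact a uniform constant $\hat\kappa$ rather than merely an $L^1$ function.
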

Theorem \ref{thm:limitecontinuo} enables us to establish asymptotic distributional results to assess the consistency of the approximations in \eqref{eq:MCMCprob} and to establish the asymptotic normality of their errors with controlled variance:
\begin{corollary} \label{prop:erroresnormales} 
Under the assumptions of Theorem \ref{thm:limitecontinuo}, for all $x\in X$, the following holds: 
\begin{equation*}
\sqrt{N}[\Phi_N(x)-\Phi(x)]\to_d \mathcal{N}(0,\sigma^2_x ) \text{ and } \sqrt{N}[\nabla\Phi_N(x)-\nabla\Phi(x)]\to_d \mathcal{N}(0,\Sigma^2_x) \textrm{ as } N\to +\infty,
\end{equation*}
 where 
 \begin{align*}
     \sigma^2_{x}&:= \int_{ \mathbb{S}^{m-1} \times 
     \C}e_c(x,v)^2  d \nuTheta \otimes \m (v,c) - \Phi(x)^2,\\
     \Sigma^2_x &:= \int_{\mathbb{S}^{m-1} \times 
     \C }[\nabla_x e_c(x,v)][\nabla_x e_c(x,v)]^T  d \nuTheta \otimes \m (v,c) - \nabla\Phi(x)\nabla\Phi(x)^T.
 \end{align*}

\end{corollary}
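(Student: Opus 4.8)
The plan is to recognize that, for each fixed $x\in X$, the estimators $\Phi_N(x)$ and $\nabla\Phi_N(x)$ in \eqref{eq:MCMCprob} are empirical means of the scalar random variables $Y_i:=e_{c_i}(x,v_i)$ and the $\R^n$-valued random vectors $Z_i:=\nabla_x e_{c_i}(x,v_i)$, and then to apply a central limit theorem to these averages. By Theorem~\ref{thm:gradient}, whose hypotheses hold at $x$ under the assumptions of Theorem~\ref{thm:limitecontinuo}, the common mean of the $Y_i$ is $\int_{\mathbb{S}^{m-1}\times\C}e_c(x,v)\,d\nuTheta\otimes\m(v,c)=\Phi(x)$ and the common mean of the $Z_i$ is $\int_{\mathbb{S}^{m-1}\times\C}\nabla_x e_c(x,v)\,d\nuTheta\otimes\m(v,c)=\nabla\Phi(x)$; consequently the candidate limiting covariances are exactly $\sigma^2_x=\mathrm{Var}(Y_1)$ and $\Sigma^2_x=\mathrm{Cov}(Z_1)$, which is precisely the content of the displayed formulas in the statement. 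No delta method is needed, since the estimators are exact sample means.

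The first step is to check finiteness of the relevant second moments. For $Y_1$ this is immediate from \eqref{function_e}: $e_c(x,v)$ is the integral of the probability density $\chi$ over a subset of $[0,+\infty)$, hence $0\le e_c(x,v)\le 1$, so that $\mathbb{E}[Y_1^2]\le 1$ and $\sigma^2_x\in[0,1]$ is well defined. For $Z_1$ I would invoke the explicit representation \eqref{grad_form} of $\nabla_x e_c(x,v)$ together with the exponential growth condition at $x$, the uniform Slater condition \eqref{assumption1}, the bound \eqref{eqeta0}, and the integrand estimates already carried out in Theorem~\ref{theorem01}; these produce a $\nuTheta\otimes\m$-integrable majorant $\kappa$ with $\|\nabla_x e_c(x,v)\|\le\kappa(v,c)$, which --- being built from the $\chi$-density multiplied by an exponential-in-$\|z\|$ factor that remains integrable against the Gaussian-type decay --- is in fact also square integrable. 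Hence $\mathbb{E}[\|Z_1\|^2]<\infty$ and $\Sigma^2_x$ is well defined.

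With the moment conditions established, the conclusion follows by a central limit theorem applied to $Y_i-\Phi(x)$ and to $Z_i-\nabla\Phi(x)$. In the i.i.d.\ case this is the classical Lindeberg--L\'evy theorem, used in its $\R^n$-valued form for the $Z_i$ (e.g.\ via the Cram\'er--Wold device), and it yields $\sqrt N[\Phi_N(x)-\Phi(x)]\to_d\mathcal N(0,\sigma^2_x)$ and $\sqrt N[\nabla\Phi_N(x)-\nabla\Phi(x)]\to_d\mathcal N(0,\Sigma^2_x)$. In the surrogate Markov chain case one instead invokes the central limit theorem for ergodic Markov chains (see \cite{Jones2004MCCLT} and \cite[Theorem~7.4]{robert_MCMC}): the square-integrability just verified, together with ergodicity and stationary distribution $\nuTheta\otimes\m$, delivers a Gaussian limit, and the asymptotic decorrelation of distinct steps reduces the limiting covariance to the stationary quantities $\sigma^2_x$ and $\Sigma^2_x$.

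The main obstacle is the square-integrability of the gradient integrand, i.e.\ the bound on $\int_{\mathbb{S}^{m-1}\times\C}\|\nabla_x e_c(x,v)\|^2\,d\nuTheta\otimes\m(v,c)$; this is precisely where the exponential growth condition and the uniform hypotheses \eqref{assumption1}--\eqref{eqeta0} enter, and it should reduce to a square version of the integrand estimates obtained for Theorem~\ref{theorem01}. A secondary point requiring care is the Markov chain case, where one must justify --- through the ergodicity assumptions underlying Theorem~\ref{thm:limitecontinuo} --- that the limiting variance is the stationary one $\sigma^2_x$ rather than a long-run variance carrying autocovariance corrections.
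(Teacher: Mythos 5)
Your proposal is correct and takes essentially the same route as the paper, which simply applies the classical Central Limit Theorem to the i.i.d.\ sequences $e_{c_i}(x,v_i)$ and $\nabla_x e_{c_i}(x,v_i)$ with means $\Phi(x)$ and $\nabla\Phi(x)$; the square-integrability you labor over is in fact immediate, since $0\le e_c(x,v)\le 1$ and \eqref{gra_bound} gives a \emph{uniform constant} bound $\hat{\kappa}$ on $\|\nabla_x e_c(x,v)\|$, so no integrable-majorant argument is needed. Your caveat about the MCMC case (long-run variance with autocovariance corrections versus the stationary $\sigma^2_x$) is a fair observation, but the paper treats that case the same way you do, deferring to \cite{robert_MCMC}.
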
  

\begin{proof} Fix $x\in X$. Let $\{(v_i,c_i)\}_{i=1}^N$ be an i.i.d. sequence of $\nuTheta \otimes \m$. 
Then, the sequence $e_{c_i}(x,v_i)$ is also also i.i.d. with mean $\mathbb{E}(e_{c_i}(x,v_i))=\int_\C\int_{\mathbb{S}^{m-1}}  e_{c_i}( v_i, x) d\nuTheta(v_i)d\m(c_i)=\Phi(x)$ and variance $\mathbb{E}(e_{c_i}(x,v_i)^2)-\mathbb{E}(e_{c_i}(x,v_i))^2=\sigma^2_x$. An analogous argument shows that  the sequence $\nabla_xe_{c_i}(x,v_i)$ is i.i.d. with mean $\nabla \Phi(x)$ and covariance matrix $\Sigma^2_x$. Therefore, the result follows from the Central Limit Theorem \cite[Theorem 27.2]{Bill86}.
\end{proof}
The following result establishes convergence in probability for our approximations.
\begin{corollary}
Under the assumptions of Theorem \ref{thm:limitecontinuo}, for all $x\in X$, the following holds:
$$
\Phi_N(x) \to_\mathbb{P} \Phi(x) \textrm{ and } \nabla\Phi_N(x) \to_\mathbb{P} \nabla\Phi(x) \textrm{ as } N\to \infty.
$$
\end{corollary}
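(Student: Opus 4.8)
The plan is to deduce this directly from the stronger modes of convergence already at our disposal, so the proof will be short. The cleanest route is through almost sure convergence. Theorem~\ref{thm:limitecontinuo} asserts that, with probability $1$, $\Phi_N \to \Phi$ and $\nabla\Phi_N \to \nabla\Phi$ uniformly on $X$. Fixing $x \in X$, uniform convergence on $X$ in particular forces pointwise convergence at $x$, so on the same probability-$1$ event we have $\Phi_N(x) \to \Phi(x)$ and $\nabla\Phi_N(x) \to \nabla\Phi(x)$. Since almost sure convergence of real- (or $\mathbb{R}^n$-) valued random variables implies convergence in probability --- for any $\varepsilon > 0$, $\mathbb{P}(|\Phi_N(x) - \Phi(x)| > \varepsilon) \le \mathbb{P}(\sup_{k \ge N} |\Phi_k(x) - \Phi(x)| > \varepsilon) \to 0$ by continuity of $\mathbb{P}$ along the decreasing family of these events --- the claim follows, applied componentwise for the gradient statement.

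An equally short alternative is to invoke Corollary~\ref{prop:erroresnormales}. Since $\sqrt{N}[\Phi_N(x) - \Phi(x)] \to_d \mathcal{N}(0, \sigma_x^2)$, writing $\Phi_N(x) - \Phi(x) = N^{-1/2}\bigl(\sqrt{N}[\Phi_N(x) - \Phi(x)]\bigr)$ and applying Slutsky's theorem yields $\Phi_N(x) - \Phi(x) \to_d 0$; convergence in distribution to a constant is equivalent to convergence in probability to that constant, hence $\Phi_N(x) \to_\mathbb{P} \Phi(x)$, and the identical argument with $\Sigma_x^2$ in place of $\sigma_x^2$ handles $\nabla\Phi_N(x) \to_\mathbb{P} \nabla\Phi(x)$.

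There is no substantial obstacle here: the statement is a standard consequence of the hierarchy of convergence notions (almost sure $\Rightarrow$ in probability, or convergence in distribution to a constant $\Rightarrow$ in probability), and all the genuine work has already been carried out in Theorem~\ref{thm:limitecontinuo} and Corollary~\ref{prop:erroresnormales}. The only point worth stating carefully is that uniform almost sure convergence on $X$ entails almost sure convergence at each individual $x \in X$ without any additional measurability considerations, precisely because the relevant supremum event over $X$ is already known to carry full probability.
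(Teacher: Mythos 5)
Your proposal is correct, and your second argument is essentially the paper's own proof: the paper deduces the claim from Corollary \ref{prop:erroresnormales}, noting that the asymptotic normality of $\sqrt{N}[\Phi_N(x)-\Phi(x)]$ forces $\mathbb{P}(|\Phi_N(x)-\Phi(x)|>\varepsilon)\to 0$ (indeed at rate $N^{-1/2}$), with the same reasoning componentwise for the gradient. Your first, leading route --- going back to Theorem \ref{thm:limitecontinuo}, extracting pointwise almost sure convergence at each fixed $x\in X$ from the almost sure uniform convergence on $X$, and then invoking the standard implication from almost sure convergence to convergence in probability --- is a genuinely different and even more elementary derivation: it bypasses the Central Limit Theorem entirely and needs no variance finiteness beyond what the theorem already provides. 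What the paper's (and your second) route buys in exchange is quantitative information: it exhibits the $O(N^{-1/2})$ stochastic rate of the error, which the purely qualitative almost-sure argument does not. Both arguments are complete; your closing remark about measurability is also well taken, since the full-probability event is the one already furnished by the uniform-convergence statement.
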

\begin{proof} Due to Corollary \ref{prop:erroresnormales}, for all $\varepsilon>0$, we have that $$\mathbb{P}(|\Phi_N(x) - \Phi(x)|>\varepsilon)= O(N^{-1/2})=o(1).$$ The proof is similar for the case of the gradient.
\end{proof}
The naive approximation $\tilde{\Phi}_N$ in \eqref{eq:sampleaverage} is also asymptotically normal, with $\sqrt{N}(\tilde{\Phi}_N(x)-\Phi(x)) \to_d \mathcal{N}(0,\tilde{\sigma}_{x}^2)$ for every $x\in X$ (thanks to the central limit theorem), where the asymptotic variance is given by $\tilde{\sigma}_{x}^2:=\Phi(x)-\Phi(x)^2$. Since for any $(c,v)$, we have $e_c(x,v)\geq e_c(x,v)^2$, it follows that  $\Phi(x)\geq \int_{  \  \mathbb{S}^{m-1} \times \C  }e_c(x,v)^2  d \nuTheta \otimes \m (v,c)$, and therefore $\tilde{\sigma}_{x}^2\geq {\sigma}_{x}^2$ for every $x \in X$. Therefore, as both convergence rates are of order $\sqrt{N}$, the asymptotic relative efficiency of $\Phi_N(x)$ with respect to $\tilde{\Phi}_N(x)$ (this is, the ratio of their asymptotic variances) is no larger than one for every $x\in X$, so estimating the probability function by $\Phi_N$ is uniformly more efficient than doing so via $\tilde{\Phi}_N$.

\section{Estimation of the Probability Function and Its Gradient: A Numerical Example}\label{Section_Nume}
In this section, we illustrate the quality of the proposed approximations. 

Let $C\in [-1,1]$ be a random variable such that $D:=\frac{1}{2}(C+1)$ follows, for some $\delta>0$, a Beta($\delta,\delta$) distribution, that is, 
$$
\m(c):=\frac{\Gamma(2\delta)}{2^{2\delta -1}\Gamma(\delta)^2}(1-c^2)^{\delta-1} \textrm{ for } c\in [-1,1],
$$
where $\Gamma(\cdot)$ denotes the Gamma function. Let $\xi \sim \mathcal{N}\left(\begin{pmatrix}
	0 \\ C
\end{pmatrix}, \begin{bmatrix}
1-C^2 & 0 \\ 0& 1-C^2
\end{bmatrix}\right) $, this is, conditional on $C=c$, the vector $\xi$ has two independent components $\xi_1\sim \mathcal{N}(0, 1-c^2)$ and $\xi_2\sim \mathcal{N}(c, 1-c^2)$ (but marginally they are dependent through the shared C) and suppose that $g\colon\mathbb{R}^n \times \mathbb{R}^2\to \mathbb{R}$ is given by 
$$g(x,\xi)= \Vert \xi\Vert^2 - \Vert x\Vert^2 -2.$$  
The (unconditional) distribution of $\xi$ is given by 

$$f_\xi(z_1,z_2)=\frac{\Gamma(2\delta)}{2^{2\delta}\pi\Gamma(\delta)^2}\int_{-1}^1 (1-c^2)^{\delta-2} \exp\left(-\frac{z_1^2+(z_2-c)^2}{2(1-c^2)}dc\right).$$

For $\delta < 1$, $f_\xi$ has a local minimum at $(0,0)$ and two modal points at $(0,\pm1)$, so we choose values of $\delta \in  (0,1)$ to make the mixture model multimodal. Figure \ref{fig:joint} illustrates the distribution of $\xi$ for the case $\delta=0.5$, highlighting its clear two modal points.

\begin{figure}[!ht]
    \centering
    \includegraphics[width=.495\linewidth]{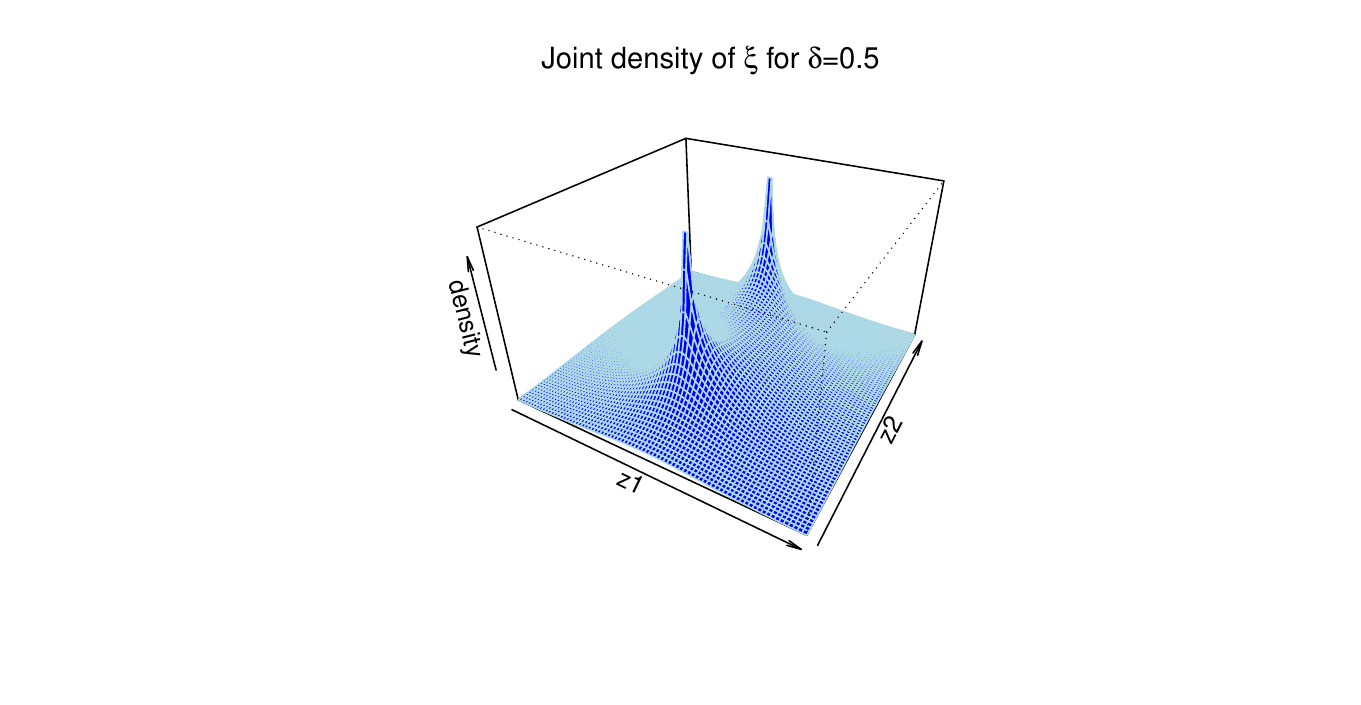}
    \includegraphics[width=.495\linewidth]{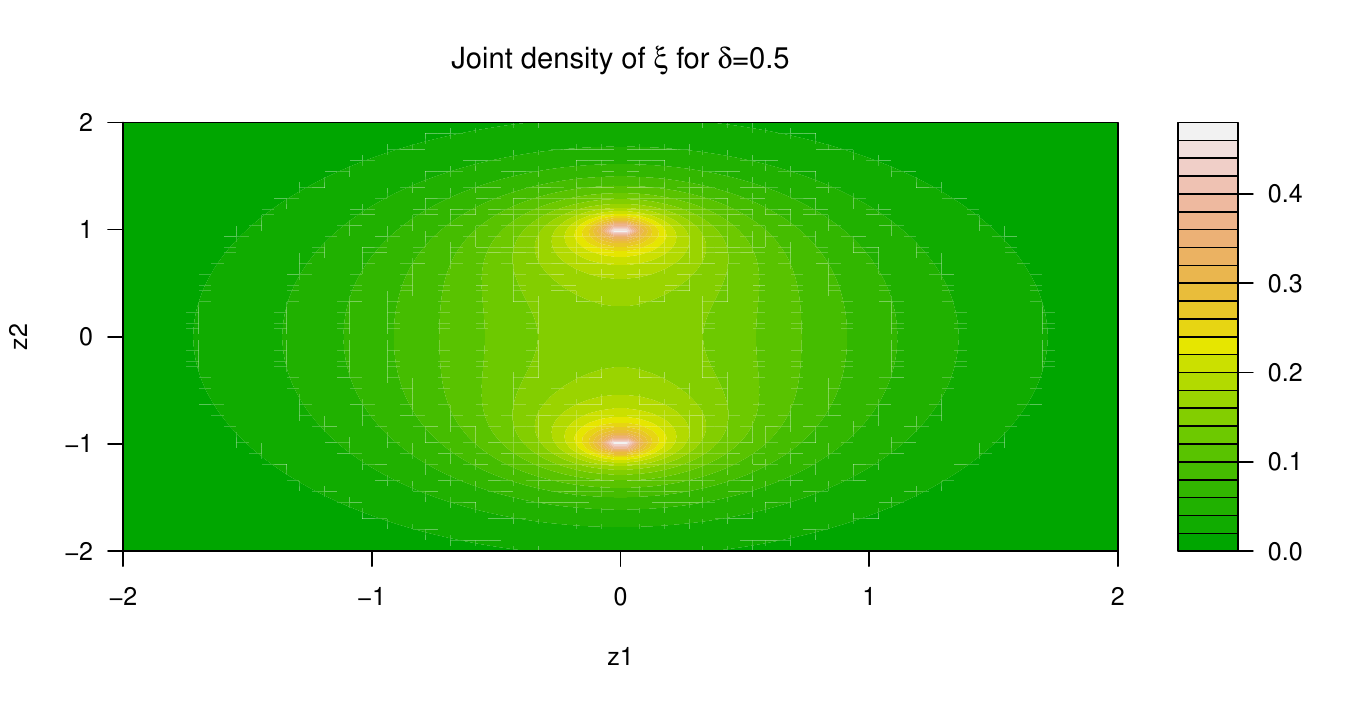}
    \caption{3-D plot (left) and heat map (right) of the joint density of $\xi$ for $\delta=0.5$.}
    \label{fig:joint}
\end{figure}

In this setting, the probability \eqref{exp:Prob} becomes
\begin{align*}
  \Phi(x)&=\int_{-1}^1\mathbb{P}(g(x,\xi) \leq 0  |C=c)d\m(c)=\frac{\Gamma(2\delta)}{2^{2\delta-1}\Gamma(\delta)^2}\int_{-1}^1(1-c^2)^{\delta-1} F_\chi^{2,c^2/(1-c^2)}\left(\frac{\| x \|^2+2}{1-c^2}\right)dc,
\end{align*} where $F_\chi^{k,\lambda}$ denotes the cumulative distribution function of a non-central chi-squared random variable with $k$ degrees of freedom and noncentrality parameter $\lambda$, and $f_\chi^{k,\lambda}$ denotes its corresponding density. Furthermore,  the derivative of the probability function is therefore given by
$$
\nabla \Phi(x)=\left(\frac{ \Gamma(2\delta)}{2^{2\delta}\Gamma(\delta)^2}\int_{-1}^1(1-c^2)^{\delta-2} f_\chi^{2,c^2/(1-c^2)}\left(\frac{\| x \|^2+2}{1-c^2}\right)dc\right) x.
$$
On the other hand, 
$$
g(x,(0,c)^\top)\leq -1 \textrm{ for all } x\in \mathbb{R}^n \textrm{ and } c\in [-1,1].
$$
Hence, assumption \eqref{assumption1} is satisfied with $\gamma_0=1/2$. Also, 
$$
|\nabla_x g(x,\xi)|\leq 2\exp\left({\Vert \xi\Vert}\right) \textrm{ for all } x\in \mathbb{R}^n \textrm{ and } \xi \in \mathbb{R}^2,
$$
which implies the growth condition from Definition \ref{def:growthcon}.

The spherical radial decomposition for $\xi$ is $\xi=(0,c)^\top+R\L(c) \Theta$, where $\L(c)=\begin{bmatrix}
    \sqrt{1-c^2} &0 \\
    0& \sqrt{1-c^2}
\end{bmatrix}$ and $R^2\sim \chi_2^2$, i.e., $R$ has the Rayleigh distribution \cite{papoulis2002probability}. The function defined in \eqref{density-like} takes the form $\chi(r)=re^{-r^2/2}$ for $r>0$. Moreover, it is clear that for fixed $c \in \C,v \in \mathbb{S}_1$, we have
$$
g(x,(0,c)^ \top+\sqrt{1-c^2}rv)   \leq 0 \iff r\leq \rho_c(x,v):=\frac{-cv_2 + \sqrt{   \| x \|^2+2 - c^2v_1 ^2    }}{\sqrt{1-c^2}},
$$
which implies that
$$e_c(x,v)= 1-e^{-\rho_c(x,v)^2/2} \quad \textrm{ and }\quad \nabla_x e_c(x,v)= \frac{  x\rho_c(x,v)e^{-\rho_c(x,v)^2/2}}{ \sqrt{ [1-c^2] [ \| x \|^2+2 - c^2v_1 ^2]      } } .$$

The GitHub page \url{https://gmmpfunctions.github.io/GMMpf/} contains simulation results and an R script for conducting further simulations with user-defined parameters. Figure \ref{fig:simu} presents simulation-based results for $x \in [0,4]$, $\delta=2.5$ and a sample size $N=100$, demonstrating that the proposed approximation $\Phi_N$ in \eqref{eq:MCMCprob} is smooth and offers significantly higher accuracy as an estimator of the true probability function compared to the naive estimator \eqref{eq:sampleaverage}, as shown in the leftmost plot. Unlike $\tilde{\Phi}_N$, the smooth profile of $\Phi_N$ admits a continuous derivative $\nabla\Phi_N$, which reasonably approximates the gradient of the probability function $\nabla\Phi$, as illustrated in the plot on the right. 
\begin{figure}[!ht]
    \centering
    \includegraphics[width=.8\linewidth]{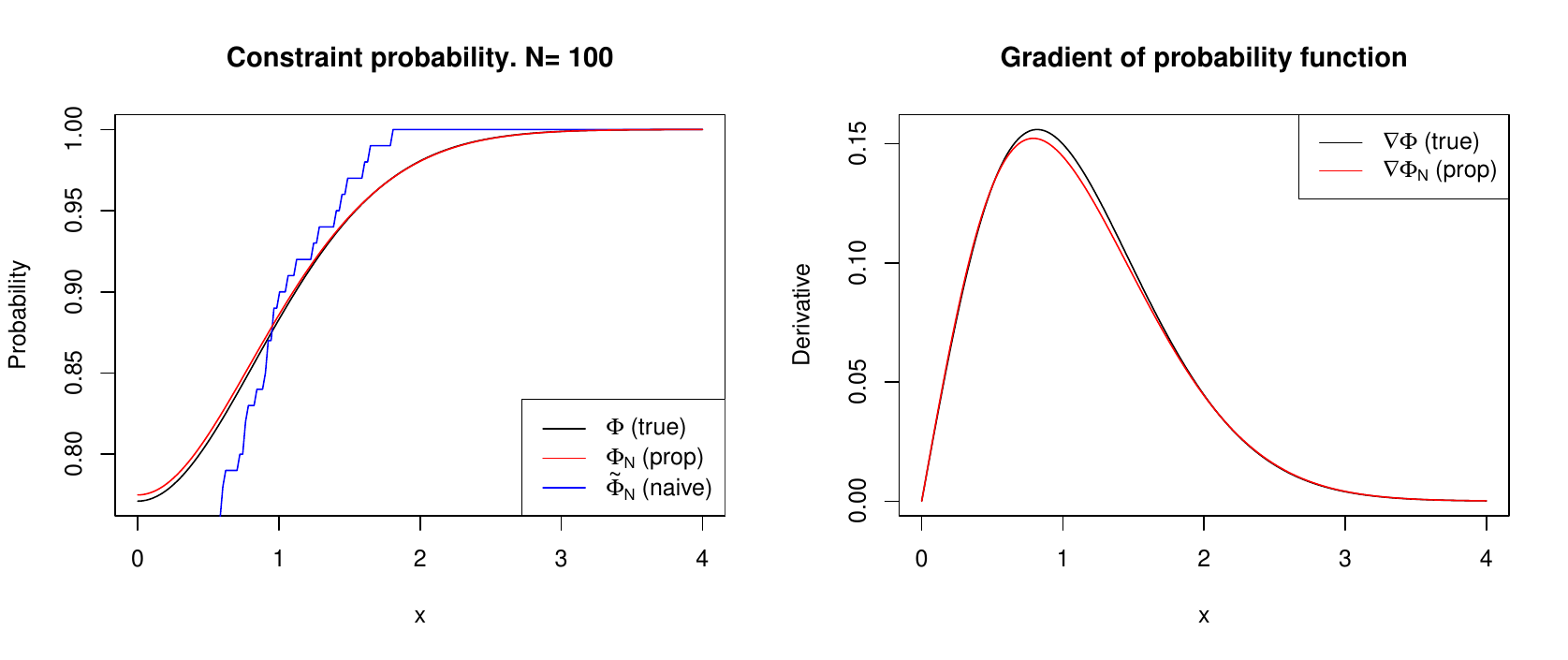}
    \caption{Estimates for $N=100$ and true values of the probability function $\Phi$ (left) and its derivative $\Phi^{\prime}$ (right). Naive estimator \eqref{eq:sampleaverage} in blue, 
 empirical spherical approximation \eqref{eq:MCMCprob} in red, and true functions in black.}
    \label{fig:simu}
\end{figure}

 We estimate the squared error and supremum error loss for the three proposed estimators using $k=1000$ different, for varied sizes of $N$. Table \ref{table:norm2} contains averages and standard deviations of $L^2$ norm (squared loss) and Table \ref{table:norminf} contains the same quantities $L^\infty$ (maximum loss) norms of the estimates $\Phi_N$, $\nabla\Phi_N$ and $\tilde{\Phi}_N$ for different values of $N$. The Github repository contains numerical plots of the error distributions
\begin{table}[!ht]
\centering
\begin{tabular}{|c|c|c|c|}
\cline{1-4}
    \multicolumn{1}{|l|}{$N$}                       &  $\Vert \Phi_N-\Phi\Vert_2^2$                                       & $\Vert \tilde{\Phi}_N-\Phi \Vert^2_2$ &$ \Vert \nabla\Phi_N-\nabla\Phi \Vert_2^2$   \\ \hline
\multicolumn{1}{|l|}{$50$}   &  $(6.8 \pm 9.3)\times 10^{-4} $   &   $(4.3 \pm 4.7)\times 10^{-3} $       &  $(4.3 \pm 5.1)\times 10^{-4} $  \\ \hline
\multicolumn{1}{|l|}{$100$}   &  $(3.4 \pm 4.8)\times 10^{-4} $   &     $(2.1 \pm 2.3)\times 10^{-3} $     &  $(2.2 \pm 2.8)\times 10^{-4} $  \\ \hline
\multicolumn{1}{|l|}{$500$}   &  $(6.4 \pm 9.1)\times 10^{-5} $   & $(4.1 \pm 4.4)\times 10^{-4} $         &  $(4.2 \pm 5.2)\times 10^{-5} $  \\ \hline
\multicolumn{1}{|l|}{$1000$}   &  $(3.2 \pm 4.6)\times 10^{-5} $   & $(2.1 \pm 2.4)\times 10^{-4} $         &   $(2.0 \pm 2.6)\times 10^{-5} $ \\ \hline
\multicolumn{1}{|l|}{$5000$}   & $(6.6 \pm 9.1)\times 10^{-6} $    & $(4.2 \pm 4.5)\times 10^{-5} $         &  $(4.2 \pm 5.2)\times 10^{-6} $  \\ \hline
\multicolumn{1}{|l|}{$10000$}   &   $(3.3 \pm 4.6)\times 10^{-6} $  & $(2.1 \pm 2.2)\times 10^{-5} $         &  $(2.1 \pm 2.7)\times 10^{-6} $  \\ \hline
\end{tabular}
\caption{Average $\pm$ (standard deviation) of squared loss $\Vert \cdot\Vert_2^2$ error of estimates for different values of $N$. First column: empirical spherical approximation $\Phi_N$; second column: naive estimator $\tilde{\Phi}_N$, third column:  empirical spherical approximation $\nabla\Phi_N$.}
\label{table:norm2}
\end{table}

\begin{table}[!ht]
\centering
\begin{tabular}{|c|c|c|c|}
\cline{1-4}
\multicolumn{1}{|l|}{$N$}                           &  $\Vert \Phi_N-\Phi\Vert_{\infty}$                                      & $\Vert \tilde{\Phi}_N-\Phi\Vert_{\infty}$ &$\Vert \nabla\Phi_N-\nabla\Phi\Vert_{\infty}$   \\ \hline
\multicolumn{1}{|l|}{$50$}   & $0.023\pm 0.015$& $0.075\pm 0.031$ & $0.018\pm 0.01$ \\ \hline
\multicolumn{1}{|l|}{$100$}   & $0.016\pm 0.011$ & $0.054\pm 0.021$& $0.013\pm 0.008$  \\ \hline
\multicolumn{1}{|l|}{$500$}   & $0.007\pm 0.005$& $0.023\pm 0.009$& $0.005\pm 0.003$\\ \hline
\multicolumn{1}{|l|}{$1000$}   & $0.004\pm 0.006$ & $0.017\pm 0.006$ & $0.004\pm 0.002$ \\ \hline
\multicolumn{1}{|l|}{$5000$}   & $(2.2\pm 1.3)\times 10^{-3}$ & $(7.5\pm 3.1)\times 10^{-3}$ & $(1.7\pm 1)\times 10^{-3}$ \\ \hline
\multicolumn{1}{|l|}{$10000$}   & $(1.6\pm 1.1)\times 10^{-3}$& $(5.3\pm 2.1)\times 10^{-3}$ & $(1.2\pm 0.7)\times 10^{-3}$ \\ \hline
\end{tabular}
\caption{Average $\pm$ (standard deviation) of supremum loss $\Vert \cdot\Vert_{\infty}$ error of estimates for different values of $N$. First column: empirical spherical approximation $\Phi_N$; second column: naive estimator $\tilde{\Phi}_N$, third column:  empirical spherical approximation $\nabla\Phi_N$.}
\label{table:norminf}
\end{table}

 	
 \section*{Declarations}
\textbf{Data availability} No datasets were generated or analyzed during the current study.\\
\textbf{Conflict of interest} The authors have no conflict of interest to declare.
 
  \section*{Acknowledgments}
  
  We thank the anonymous reviewers for their careful reading of our manuscript and their many insightful comments and suggestions.
 %

\appendix

\section{Well-Posedness of the Probability Function}

\emph{Proof of Proposition \ref{prop:econtinua}}.  Since $\z$ and $\Sigma$ are measurable, the map $(x,v,c,r)\mapsto (x,r\L(c)v+ \z(c))$ is measurable.  Since $g$ is measurable, the composition $(x,v,c,r)\mapsto F(x,v,c,r):=g(x,r\L(c)v+ \z(c))$ is also measurable. Hence, the set $\{(x,v,c,r)\colon r\geq 0, F(x,v,c,r)\leq 0\}$ is measurable. Therefore, the function $(x,v,c,r) \mapsto \chi(r) \1_{ \{ r'\geq 0 :
		g(x, r'\L(c)v+ \z(c))     \leq 0   \}   }(r)$ is measurable. Hence, using Fubini's Theorem we get that \begin{align*}
		(c,v,x) \mapsto  \int\limits_{  \{ r'\geq 0 :
			g(x, r'\L(c)v+ \z(c))     \leq 0   \}      }   \chi( r)dr 
	\end{align*} is measurable, which proves the measurability of $e$.  Now, let us fix $(c,v) \in \C\times \mathbb{S}^{m-1}$ and let $x\in \mathbb{R}^n$ be a $\z$-uniform Slater point. Consider $N >0$ and a sequence $x_k \to x$. Define the functions
\begin{align*}
	\eta_k(r)&:= \chi(r) \1_{ \{ r' \geq  :
		g(x_k, r'\L(c)v+ \z(c))     \leq 0   \}   }(r),  \text{ and } \eta(r):= \chi(r) \1_{ \{ r' \geq  :
		g(x, r'\L(c)v+ \z(c))     \leq 0   \}   }(r) 
\end{align*}
First, note that  $|\eta_k(r)| \leq  \chi(r) $ for all $r\in [0,+\infty)$. Furthermore,  due to the convexity of $g$ with respect to its second argument, it is straightforward to show  that $\eta_k $ converges almost everywhere to $\eta$ as $k \to \infty$ whenever $g(x, r'\L(c)v+ \z(c))     \leq 0$, which holds for every $c\in \C$ except for at most an $\m$-null set as $x$ a $\z$-uniform Slater point. Therefore, by the Lebesgue Dominated Convergence theorem,  we have that 
 $$e_c(x,v)= \int_{\R}  \eta(r)dr =\lim\limits_{ k \to \infty }\int_{\R} \eta_k(r)dr=\lim\limits_{ k \to \infty }e_c(x_k,v),$$ which implies that the function   $e_c(\cdot, v)$ is continuous at $x$.

\section{Differentiability of the probability function}

Define the sets of finite and infinite directions as follows:
\begin{align*}
F(x)&:= \{ (v,c) \in \mathbb{S}^{m-1} \times \C   :   \exists r>0 \text{ s.t. } g(x,\z(c)+r\L(c)v) >0  \},	\\
I(x)&:= \{ (v,c ) \in \mathbb{S}^{m-1} \times \C:   \forall r>0 \;  g(x,\z(c) + r\L(c)v) <0  \}.
	\end{align*}
Furthermore, we introduce the radius function $\rho:\mathbb{R}^n  \times \mathbb{S}^{m-1}\times \C \to \R_+$. defined by
\begin{align}\label{radius_function}
\rho_c(x, v):= \rho(x,v,c):=\sup\{ r \geq 0 : g(x,r\L(c)v + \z(c)) \leq 0 \}.
\end{align}

The following proposition is a direct adaptation of the arguments presented in \cite[Lemmas 3.2, Lemma 3.3 and   3.4]{vanAckooij2022}  (see also \cite{vanAckooij_Henrion_2016}).
\begin{lemma}\label{firstderivative}
	Suppose that condition \eqref{assumption1}  holds at $\bar{x}$. Then, for every $(x,v,c)  \in U \times  \mathbb{S}^{m-1} \times \C$,  the set $\{ r\geq 0 :    g(x,
	r\L(c)v +\z(c)) \leq 0     \} = [0, \rho_c(x,v)]$ (with the convention that $[0, \infty]=[0, \infty)$), and 
 \begin{align}\label{lower_ineq}
     \langle \nabla_z g(x,\z(c)+\rho_c(x,v) \L(c) v), \L(c) v\rangle \geq - \frac{ g(x,\z(c)) }{ \rho_c(x,v)}.
 \end{align}
 Moreover, the function  $e$, defined   in \eqref{function_e}, satisfies  the identity
	\begin{align}\label{repr_e}
		e_c(x,v) =   \displaystyle  \int\limits_{0}^{\rho_c(x,v)} \chi(r)  dr, \quad \text{   for every }(x,v,c) \in \mathbb{B}_\varepsilon(\bar{x}) \times \mathbb{S}^{m-1} \times \C.
	\end{align}
 Furthermore, the function $\rho$ is  $\mathcal{B}(\mathbb{R}^n) \otimes \mathcal{B}(\mathbb{S}^{m-1}) \otimes  \mathcal{F}$ measurable, and for every sequence $(x_k,v_k) \to (x, v)$, we have that $\rho_c(x_k, v_k) \to \rho_c(x,v)$ for all $c\in \C$.  In addition, if $\rho_c(x,v) <+\infty$, then the function $\rho_c$ is continuously differentiable with respect to $x$, with gradient
\begin{equation}\label{formula_gradient_rho}
\nabla_x\rho_{c}(x,v) =
\frac{\nabla_x g(x,\z(c)+\rho_{c}(x,v) \L(c)v) }{\langle \nabla_z g(x,\z(c)+\rho_{c}(x,v) \L(c)v),  \L(c)v\rangle  }.
\end{equation}
\end{lemma}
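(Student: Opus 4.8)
The plan is to reduce the statement, for each fixed $c\in\C$, to the single-Gaussian situation treated in \cite[Lemmas 3.2--3.4]{vanAckooij2022}, and then to verify that every constant produced there can be chosen independently of $c$; the uniform Slater condition \eqref{assumption1} together with the uniform bound \eqref{eqeta0} on $\z,\L$ is exactly what makes this uniformization possible. Concretely, fix $(x,v,c)\in U\times\mathbb{S}^{m-1}\times\C$ and study the one-dimensional function $\psi(r):=g(x,\z(c)+r\L(c)v)$, $r\ge 0$. Since $g(x,\cdot)$ is convex and $C^1$, $\psi$ is convex and $C^1$, and $\psi(0)=g(x,\z(c))<-\gamma_0<0$ by \eqref{assumption1}. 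A convex function of one real variable that is strictly negative at $0$ has sublevel set $\{r\ge 0:\psi(r)\le 0\}$ equal to an interval $[0,\rho]$ with $\rho\in(0,+\infty]$, and this $\rho$ is precisely $\rho_c(x,v)$ from \eqref{radius_function}; when $\rho_c(x,v)<+\infty$, continuity of $\psi$ forces $\psi(\rho_c(x,v))=0$ and $\psi<0$ on $[0,\rho_c(x,v))$. This establishes the interval description, and substituting it into the definition \eqref{function_e} of $e_c$ gives \eqref{repr_e} immediately.

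I would then derive \eqref{lower_ineq} from convexity: if $\rho_c(x,v)<+\infty$, the subgradient inequality for $\psi$ at the point $\rho_c(x,v)$ evaluated at $0$ reads $\psi(0)\ge\psi(\rho_c(x,v))-\rho_c(x,v)\,\psi'(\rho_c(x,v))$, and since $\psi(\rho_c(x,v))=0$, $\psi'(r)=\langle\nabla_z g(x,\z(c)+r\L(c)v),\L(c)v\rangle$, and $\rho_c(x,v)>0$, rearranging gives \eqref{lower_ineq} (when $\rho_c(x,v)=+\infty$ the inequality is vacuous, and in fact $\psi$ is then nonincreasing). Combined with $\psi(0)<-\gamma_0$, inequality \eqref{lower_ineq} yields the transversality estimate
\begin{equation*}
\langle\nabla_z g(x,\z(c)+\rho_c(x,v)\L(c)v),\L(c)v\rangle\ \ge\ \frac{\gamma_0}{\rho_c(x,v)}\ >\ 0\qquad\text{whenever }\rho_c(x,v)<+\infty,
\end{equation*}
which says that $\psi$ crosses the level $0$ with a strictly positive slope, bounded below as long as $\rho_c(x,v)$ stays bounded; this estimate is the workhorse for the remaining claims.

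For measurability, the interval description gives, for each rational $t\ge 0$, the identity $\{(x,v,c):\rho_c(x,v)>t\}=\{(x,v,c):g(x,t\L(c)v+\z(c))<0\}$, and $(x,v,c)\mapsto g(x,t\L(c)v+\z(c))$ is a Carath\'eodory function --- continuous in $(x,v)$ and measurable in $c$ because $\z,\L$ are measurable and $g$ is continuous --- hence jointly $\mathcal{B}(\mathbb{R}^n)\otimes\mathcal{B}(\mathbb{S}^{m-1})\otimes\mathcal{F}$-measurable; taking a countable union over $t$ then gives measurability of $\rho$. Joint continuity $\rho_c(x_k,v_k)\to\rho_c(x,v)$ I would establish along the lines of \cite[Lemma 3.3]{vanAckooij2022}: the lower bound on $\liminf_k\rho_c(x_k,v_k)$ uses $\psi<0$ on $[0,\rho_c(x,v))$ and continuity of $g$, while the upper bound uses the transversality estimate above when $\rho_c(x,v)<+\infty$ and a recession-type argument when $\rho_c(x,v)=+\infty$, with \eqref{eqeta0} keeping the bounds uniform in $c$. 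Finally, on the set $\{\rho_c(x,v)<+\infty\}$ I would apply the implicit function theorem to $G(x,r):=g(x,\z(c)+r\L(c)v)$: it is $C^1$, $G(x,\rho_c(x,v))=0$, and $\partial_r G(x,\rho_c(x,v))>0$ by the transversality estimate, so $\rho_c(\cdot,v)$ is $C^1$ near $x$, and differentiating $G(x,\rho_c(x,v))\equiv 0$ and solving for $\nabla_x\rho_c(x,v)$ yields \eqref{formula_gradient_rho}.

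The step I expect to be the main obstacle is the joint-continuity claim for $\rho$ --- specifically, making both one-sided estimates uniform over $c\in\C$ and treating the value $+\infty$ correctly --- since this is where \eqref{assumption1}, the transversality consequence of \eqref{lower_ineq}, and the uniform bound \eqref{eqeta0} must all be used together; the remaining steps are essentially a transcription of \cite[Lemmas 3.2--3.4]{vanAckooij2022} into the mixture notation.
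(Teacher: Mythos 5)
Your proposal is correct and follows essentially the same route as the paper, which likewise gets the interval description and \eqref{repr_e} from convexity of $g(x,\cdot)$ plus the uniform Slater condition, proves measurability of $\rho$ via the same identity $\{\rho_c(x,v)>\nu\}=\{g(x,\nu\L(c)v+\z(c))<0\}$, and obtains \eqref{lower_ineq}, the continuity of $\rho_c$ and the implicit-function-theorem differentiation as a direct adaptation of \cite[Lemmas 3.2--3.4]{vanAckooij2022}. One remark: differentiating the identity $g(x,\z(c)+\rho_c(x,v)\L(c)v)=0$ in fact yields $\nabla_x\rho_c(x,v)=-\nabla_x g/\langle\nabla_z g,\L(c)v\rangle$, so your derivation (correctly) produces a minus sign that is missing in \eqref{formula_gradient_rho} as printed, as the worked example of Section \ref{Section_Nume} confirms.
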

\begin{proof}
    The equality 
    $$\{ r\geq 0 :    g(x,
	r\L(c)v +\z(c)) \leq 0     \} = [0, \rho_c(x,v)]$$
 follows from the convexity of $g$ with respect to $z$, which directly implies \eqref{repr_e} (see, e.g., \cite[Lemma 2.4]{MR4000225} or \cite[Lemma 3.2]{MR3273343} for further details). Furthermore, the measurability of $\rho$ follows from the equality: for $\nu\geq 0$,
 \begin{align*}
    \{ (x,v,c) \in U \times \mathbb{S}^{m-1} \times \C : \rho(c,x,v)>\nu \} = \{ (x,v,c) \in U \times \mathbb{S}^{m-1} \times \C  :    g(x,
	\nu\L(c)v +\z(c))< 0  \},
 \end{align*}
 where the set on the right-hand side of the equality is measurable due to the continuity of $g$ and measurability of $\z$ and $\L$.
\end{proof}

Now, we establish a lower bound for the inner product between the partial gradient of $g$ with respect to $z$ and the directions over the sphere. This technical inequality follows from the convexity of the function $g$ 
 (with respect to $z$), and its proof is obtained by adapting the arguments from  \cite[Lemma 3.3]{vanAckooij2022} to our setting.

\begin{proposition}
	Suppose that   \eqref{assumption1}  holds, and consider $(x,v,c)  \in U \times  \mathbb{S}^{m-1} \times \C$ with $(v,c) \in F(x)$. Let $r_0$ such that    $\{ r\geq 0 :
	g(x, r\L(c)v+ \z(c))     \leq 0   \}=[0,r_0]$. Then  $\langle \nabla_z g(x,\z(c)+r_0\L(c)v),\L(c)v \rangle\geq -r_0^{-1}g(x,\z(c))>0.$
\end{proposition}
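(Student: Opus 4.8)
\emph{Proof proposal.} The plan is to reduce the multivariate claim to an elementary estimate for a one–dimensional convex function. Define $\psi\colon[0,+\infty)\to\mathbb{R}$ by $\psi(r):=g(x,\z(c)+r\L(c)v)$. Since $g$ is convex in its second argument and the map $r\mapsto \z(c)+r\L(c)v$ is affine, $\psi$ is convex; moreover, as $g$ is differentiable, so is $\psi$, with $\psi'(r)=\langle \nabla_z g(x,\z(c)+r\L(c)v),\L(c)v\rangle$. Thus the quantity to be bounded below is precisely $\psi'(r_0)$, and the inequality to be proven is nothing but \eqref{lower_ineq} together with the observation that its right-hand side is strictly positive.

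First I would record three facts about $\psi$. By the uniform Slater condition \eqref{assumption1}, $\psi(0)=g(x,\z(c))<-\gamma_0<0$; by continuity of $\psi$ this forces $r_0>0$ (a whole interval $[0,\varepsilon)$ lies in the sublevel set). Since $(v,c)\in F(x)$, there exists $r>0$ with $\psi(r)>0$, hence $r_0<+\infty$ (this is the content of the first part of Lemma \ref{firstderivative}, which identifies the sublevel set with $[0,\rho_c(x,v)]$). Finally, since $\{r\ge 0:\psi(r)\le 0\}=[0,r_0]$ with $r_0$ finite and $\psi$ continuous, the value $\psi(r_0)$ can be neither strictly positive (by the definition of $r_0$) nor strictly negative (else, by continuity, the sublevel set would extend beyond $r_0$), so $\psi(r_0)=0$.

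Next I would apply the first-order convexity inequality for the differentiable convex function $\psi$ at the point $r_0$, evaluated at $0$:
\[
\psi(0)\;\ge\;\psi(r_0)+\psi'(r_0)(0-r_0)\;=\;-\,r_0\,\psi'(r_0).
\]
Dividing by $r_0>0$ and rearranging yields $\psi'(r_0)\ge -\psi(0)/r_0=-r_0^{-1}g(x,\z(c))$, which is exactly the asserted bound. The strict positivity $-r_0^{-1}g(x,\z(c))>0$ is then immediate from $g(x,\z(c))<-\gamma_0<0$ and $r_0>0$.

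I do not anticipate a genuine obstacle here; the only points requiring care are the justification that $r_0\in(0,+\infty)$ and that $\psi(r_0)=0$, both of which rest on the continuity of $g$, the Slater condition \eqref{assumption1}, and the hypothesis $(v,c)\in F(x)$ (equivalently, on the description of the sublevel set supplied by Lemma \ref{firstderivative}). The essential idea is simply that restricting to the ray through $\z(c)$ in direction $\L(c)v$ turns the statement into the standard support-line estimate for a convex function that is strictly negative at the origin and vanishes at $r_0$.
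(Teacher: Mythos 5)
Your argument is correct: restricting to the ray $r\mapsto g(x,\z(c)+r\L(c)v)$, noting $r_0\in(0,+\infty)$ with value $0$ at $r_0$, and applying the first-order convexity (support-line) inequality at $r_0$ is exactly the reasoning behind \eqref{lower_ineq} and the argument of \cite[Lemma 3.3]{vanAckooij2022} that the paper invokes for this proposition. So you have, in effect, written out the proof the paper only cites, with the same approach.
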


The following result pertains to a technical result of estimating the gradients of the (parameterized) radius function defined in \eqref{radius_function}. Since we cannot assume a priori that the radial probability function is differentiable, the proof of this result relies on techniques from variational analysis and generalized differentiation. We refer to \cite{MR3823783,Rockafellar_Wets_2009} for the primary notation and the tools used in the proof. 

\begin{theorem}\label{theorem01} Let $\bar{x}$ be a $\z$-uniform Slater point, and suppose that $g$ satisfies the exponential growth condition at $\bar{x}$. Then, there exists $\varepsilon>0$ and $\tilde{\C} \subset \C$ with $\m(\C/\tilde{\C})=0$ such that for all $(x,v,c) \in \mathbb{B}_\varepsilon (\bar x)\times \mathbb{S}^{m-1} \times \tilde{\C}$ the function $e_c$ is continuously differentiable with respect to $x$, with gradient
	\begin{align}\label{grad_form}
		\nabla_x e_c(x,v) =\left\{    \begin{array}{cc}
		 \chi(\rho_{c}(x,v))  \nabla_x\rho_{c}(x,v) &\text{ if } (v,c) \in F(x),\\
		 0 &\text{ if } (v,c) \in I(x).
		\end{array}
		\right.
	\end{align}
Furthermore, there exist $\hat{\kappa}\geq 0$ such that 
\begin{align}\label{gra_bound}
	\| \nabla_x e_c(x,v)\| \leq \hat{\kappa}, \quad \text{ for all } (x,v,c) \in \mathbb{B}_\varepsilon(\bar{x}) \times \mathbb{S}^{m-1} \times \C.
\end{align} 
\end{theorem}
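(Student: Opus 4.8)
The plan is to exploit the representation \eqref{repr_e}, $e_c(x,v)=\int_0^{\rho_c(x,v)}\chi(r)\,dr$, which is valid on $\mathbb{B}_\varepsilon(\bar x)\times\mathbb{S}^{m-1}\times\C$ by Lemma \ref{firstderivative} once $\varepsilon$ is small enough that every $x\in\mathbb{B}_\varepsilon(\bar x)$ is a $\z$-uniform Slater point (shrinking the neighbourhood $U$ from \eqref{assumption1} to a ball). I would split the argument according to the dichotomy $F(x)$ versus $I(x)$. On $I(x)$ the set $\{r\ge 0: g(x,\z(c)+r\L(c)v)\le 0\}$ equals $[0,+\infty)$, so $\rho_c(x,v)=+\infty$ and $e_c(x,v)=\int_0^\infty\chi(r)\,dr=1$ is locally constant in $x$ — here one needs that $I(x)$ is "open" in the relevant sense, i.e. if $(v,c)\in I(x)$ then $(v,c)\in I(x')$ for $x'$ near $x$; this follows from continuity of $g$ together with the Slater point argument (a strictly feasible direction stays strictly feasible). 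Hence $\nabla_x e_c(x,v)=0$ there. On $F(x)$, $\rho_c(x,v)<+\infty$, Lemma \ref{firstderivative} gives that $\rho_c(\cdot,v)$ is $C^1$ in $x$ with gradient \eqref{formula_gradient_rho}, and the fundamental theorem of calculus / Leibniz rule applied to \eqref{repr_e} yields $\nabla_x e_c(x,v)=\chi(\rho_c(x,v))\nabla_x\rho_c(x,v)$, which is \eqref{grad_form}. Continuity of $x\mapsto\nabla_x e_c(x,v)$ then follows from continuity of $\rho_c$ (Lemma \ref{firstderivative}), of $\chi$, and of $\nabla_x g,\nabla_z g$; the only subtlety is gluing the two branches along the boundary between $F(x)$ and $I(x)$, where one checks that $\rho_c\to+\infty$ forces $\chi(\rho_c)\nabla_x\rho_c\to 0$ using the bound below.

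The substantive part is the uniform bound \eqref{gra_bound}. On $I(x)$ the gradient is $0$, so only $F(x)$ matters. From \eqref{formula_gradient_rho},
\begin{equation*}
\|\nabla_x e_c(x,v)\|=\chi(\rho_c(x,v))\,\frac{\|\nabla_x g(x,\z(c)+\rho_c(x,v)\L(c)v)\|}{\langle\nabla_z g(x,\z(c)+\rho_c(x,v)\L(c)v),\L(c)v\rangle}.
\end{equation*}
For the numerator I would use the exponential growth condition together with \eqref{eqeta0}: writing $z=\z(c)+\rho_c(x,v)\L(c)v$, we have $\|z\|\le\eta_0+\rho_c(x,v)\eta_0$ (since $\|\L(c)v\|\le\|\L(c)\|\le\eta_0$ and $\|v\|=1$), so $\|\nabla_x g(x,z)\|\le a\exp(b\eta_0(1+\rho_c(x,v)))$ for $x\in\mathbb{B}_\varepsilon(\bar x)$. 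For the denominator I would invoke the lower inequality \eqref{lower_ineq} from Lemma \ref{firstderivative}: with $r_0=\rho_c(x,v)$,
\begin{equation*}
\langle\nabla_z g(x,\z(c)+r_0\L(c)v),\L(c)v\rangle\ge-\frac{g(x,\z(c))}{r_0}\ge\frac{\gamma_0}{r_0},
\end{equation*}
using $g(x,\z(c))<-\gamma_0$ from \eqref{assumption1}. Substituting, and recalling $\chi(r)=\frac{2\pi^{m/2}}{\Gamma(m/2)}r^{m-1}e^{-r^2/2}$,
\begin{equation*}
\|\nabla_x e_c(x,v)\|\le\frac{2\pi^{m/2}}{\Gamma(m/2)}\,r_0^{m-1}e^{-r_0^2/2}\cdot\frac{a e^{b\eta_0(1+r_0)}}{\gamma_0/r_0}=\frac{2a\pi^{m/2}e^{b\eta_0}}{\gamma_0\Gamma(m/2)}\,r_0^{m}e^{b\eta_0 r_0}e^{-r_0^2/2}.
\end{equation*}
The function $r\mapsto r^{m}e^{b\eta_0 r}e^{-r^2/2}$ is continuous on $[0,+\infty)$ and tends to $0$ as $r\to+\infty$ (the Gaussian factor dominates), hence attains a finite maximum $M_{m,b,\eta_0}$; setting $\hat\kappa:=\frac{2a\pi^{m/2}e^{b\eta_0}}{\gamma_0\Gamma(m/2)}M_{m,b,\eta_0}$ gives a constant independent of $(x,v,c)$, which also covers the boundary/$I(x)$ cases since the bound degenerates continuously to $0$ as $r_0\to+\infty$.

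I expect the main obstacle to be the rigorous justification of differentiability itself rather than the estimate: one cannot assume a priori that $e_c(\cdot,v)$ is differentiable, and on the critical set where $F(x)$ and $I(x)$ meet the radius $\rho_c$ jumps to $+\infty$, so a direct Leibniz-rule argument is delicate there. The clean way is to argue that for fixed $(v,c)$ the set of $x$ with $(v,c)\in I(x)$ is open (Slater-type openness) and the set with $(v,c)\in F(x)$ is open (since $\rho_c<+\infty$ is stable under perturbation, again by Lemma \ref{firstderivative}'s continuity statement and convexity), so their union — a neighbourhood of $\bar x$ intersected appropriately — is covered by two open pieces on each of which $C^1$-smoothness is already established, making the case distinction exhaustive and the global $C^1$ conclusion immediate; the uniform bound \eqref{gra_bound} then certifies that the single constant $\hat\kappa$ works across all parameters, which is exactly what the dominated-convergence and uniform-convergence arguments of Theorem \ref{thm:gradient} and Theorem \ref{thm:limitecontinuo} will need.
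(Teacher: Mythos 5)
Your derivation of the uniform bound \eqref{gra_bound} is correct and is essentially the paper's own estimate: exponential growth plus \eqref{eqeta0} for the numerator, \eqref{lower_ineq} with \eqref{assumption1} for the denominator, and finiteness of $\sup_{r\geq 0} r^{m}e^{b\eta_0 r}e^{-r^{2}/2}$. The gap is in the differentiability argument. Your ``clean way'' rests on the claim that, for fixed $(v,c)$, the set $\{x:(v,c)\in I(x)\}$ is open, so that the $F$/$I$ dichotomy splits a neighborhood of $\bar x$ into two open pieces on each of which the Leibniz rule applies. That openness claim is false: strict feasibility along an \emph{unbounded} ray is not stable under perturbations of $x$, because the exponential growth condition only gives $|g(x',z)-g(x,z)|\leq a e^{b\|z\|}\|x'-x\|$, which is not uniform in $z$ along the ray. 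Concretely, take $n=m=1$, $\z(c)=0$, $\L(c)=1$, and $g(x,z)=x^{2}e^{z}-1$: this is convex in $z$, satisfies the Slater condition and the exponential growth condition near $\bar x=0$, yet $(v,c)\in I(0)$ while $(v,c)\in F(x')$ for every $x'\neq 0$, with $\rho_c(x',v)\approx 2\ln(1/|x'|)$ finite. So the $I$-set can be a single point, the case distinction does not reduce to two open regions, and differentiability at such boundary points is exactly the delicate issue (it is the reason the exponential growth condition is imposed at all). Your earlier remark that $\chi(\rho_c)\nabla_x\rho_c\to 0$ as $\rho_c\to+\infty$ points in the right direction, but continuity of the \emph{candidate} gradient formula across the boundary does not by itself prove that $e_c(\cdot,v)$ is differentiable there; one needs in addition something like local Lipschitz continuity and a subdifferential or mean-value argument to pass from ``gradient exists and is small nearby'' to ``derivative exists and equals $0$ at the boundary point.''

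This is precisely how the paper closes the gap: it never assumes differentiability a priori, but shows that the Fr\'echet subdifferential $\hat\partial_x e_c(x,v)$ is contained in $\{\chi(\rho_c(x,v))\nabla_x\rho_c(x,v)\}$ on $F(x)$ and in $\{0\}$ on $I(x)$ (where $e_c=1$ is a maximum), establishes the uniform subgradient bound you also obtained, deduces local Lipschitz continuity of $e_c(\cdot,v)$ from that bound via \cite[Theorem 4.15]{MR3823783}, and then, at points with $(v,c)\in I(x)$, shows that subgradients at nearby points $x_k$ (which may lie in $F(x_k)$ with $\rho_c(x_k,v)\to+\infty$) tend to $0$ by the same estimate, so that \cite[Theorem 4.17]{MR3823783} yields strict differentiability with zero gradient; continuous differentiability and \eqref{grad_form} then follow. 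To repair your proposal you would need to replace the openness claim by an argument of this type (or a direct difference-quotient estimate at boundary points using your bound), since as written the case analysis is not exhaustive in the topological sense you need.
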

\begin{proof}
Let $U$ be the neighborhood in \eqref{assumption1} and $\tilde{C}$ the subset of $\C$ where the inequality \eqref{assumption1} holds. We divide the proof into three claims.\\
 \textbf{Claim 1:}  For all $(x,v,c) \in U\times \mathbb{S}^{m-1} \times \tilde{C}$, the following inclusions hold:
\begin{align}\label{subgrad_form}
 		\hat{\partial}_x e_c( x,v) \subset   \{ 	\chi(\rho_{c}(x,v))  \nabla_x\rho_{c}(x,v) \} \textrm{ for } (v,c)\in F(x) \textrm{ and } \hat{\partial}_x e_c( x,v) \subset \{	0 \} \textrm{ for } (v,c) \in I(x),
 	\end{align} 
  where $\hat{\partial}_x e_c( x,v)$ denotes the Fr\'echet subdifferential of the function $e_c$ with respect to $x$. 
  \\
   \emph{Proof of Claim 1:} 
 Indeed, let $x\in U$. Then, by Lemma \ref{firstderivative}, we have that \eqref{grad_form} holds whenever $(v,c) \in F(x)$.  On the other hand, if $(v,c) \in I(x)$, then $e_c(x,v)=1$, which implies that $e_c$  attains its maximum. It is then straightforward to show that $	\hat{\partial}_x e_c( x,v) \subset \{0\}$.
 
 \textbf{Claim 2:} There exist $\gamma >0$ and $\hat{\kappa}\geq 0$ such that 
 \begin{align*}
 		\hat{\partial}_x e_c( x,v) \subset  \hat{\kappa}\mathbb{B}, \quad \text{for all } (x,v,c) \in \mathbb{B}_\gamma(\bar{x})  \times \mathbb{S}^{m-1} \times \tilde{C}.
 \end{align*} 
    \emph{Proof of Claim 2:} Let $\varepsilon, a, b>0$ be the parameters in the definition of the exponential growth condition. By shrinking $\varepsilon$ if necessary, we can assume that $\mathbb{B}_\varepsilon(\bar x) \subset U$, where $U$ is the neighborhood of $\bar x$ from \eqref{assumption1}. Now, consider $x\in \mathbb{B}_\varepsilon(\bar x)$ and $x^\ast \in \hat{\partial}_x e_c( x,v) $. Then, using \eqref{density-like}, \eqref{formula_gradient_rho}, and \eqref{subgrad_form}, we get   
\begin{equation*}
    \| x^\ast\| \leq 	\chi(\rho_{c}(x,v)) \| \nabla_x\rho_{c}(x,v)  \| =   c_0  \rho_c(x,v)^{m-1} \exp( - \rho(x,v)^2/2 )  \frac{\| \nabla_x g(x,\z(c)+\rho_{c}(x,v) \L(c)v) \| }{|\langle \nabla_z g(x,\z(c)+\rho_{c}(x,v) \L(c)v),  \L(c)v\rangle | } ,
\end{equation*}
where $c_0  = \frac{2\pi^{\frac{m}{2}}	}{\Gamma(\frac{m}{2})}$. Now, as a consequence of \eqref{eqeta0}, \eqref{assumption1}, \eqref{lower_ineq}, and the exponential growth condition, we have that
\begin{equation}\label{ineqxast}
\begin{aligned}
    \| x^\ast\| & \leq  c_0 \gamma_0 \rho_c(x,v)^{m}    \exp( - \rho_c(x,v)^2/2 )  a\exp(b \rho_c(x,v) \|\L(c)\|) \exp( b \| \z(c)\| ) \\ &\leq  a c_0 \gamma_0  \exp( b \eta_0)   \rho_c(x,v)^m \exp(-\rho_c(x,v)^2/2) \exp (b \eta_0  \rho_c(x,v)). 
\end{aligned}  
\end{equation}
Consequently, 
\begin{align*}
        \| x^\ast\| &    \leq  a c_0 \gamma_0  \exp( b \eta_0) \sup\{  r^m \exp(-r^2/2) \exp (b \eta_0 r)  : r\geq 0 \}  =:\hat{\kappa}.
\end{align*}
This concludes the proof of the claim.\\
\textbf{Claim 3:}  For all $(v,c)\in \mathbb{S}^{m-1} \times \tilde{\C}$, the function $x\mapsto e_c(x,v)$ is continuously differentiable over  $\mathbb{B}_{\gamma/2} (\bar x)$.  Consequently,   \eqref{grad_form} and \eqref{gra_bound} hold.\\
   \emph{Proof of Claim 3:} Fix $(v,c) \in \mathbb{S}^{m-1} \times \C$. First, using \cite[Theorem 4.15]{MR3823783}, we conclude that $x\mapsto e_c(x, v)$ is Lipschitz continuous around any $x \in \mathbb{B}_{\varepsilon/2}(\bar x)$. Second, due to   Lemma \ref{firstderivative}, it is enough to prove continuous differentiability when $(v,c) \in I(x)$. Indeed, consider a sequence $x_k \to x$ and $x_k^\ast \in \hat{\partial }_x e_c(x_k,v )$; we will prove that $x_k^\ast \to 0$. If $(v,c) \in I(x_k)$, we have that $\| x_k^\ast\| =0$, so we can assume that $(v,c) \in F(x_k)$ for all $k\in \mathbb{N}$.  Then, by Lemma \ref{firstderivative} and using the estimate given in \eqref{ineqxast}, we have that $x_k^\ast \to 0$. Finally, \cite[Theorem 4.17]{MR3823783} implies that $x\mapsto e_c(x,v )$ is strictly differentiable at $x$ with $\partial_x e_c(x,v) =\{0\}$, and from the arbitrariness of $x$, we obtain its continuous differentiability.  \qed
\end{proof}
\emph{Proof of Theorem \ref{thm:gradient}}
 By defining $\nabla_xe_c(x,v)=0$ whenever $c \notin\tilde{C}$ and observing that this term makes no contribution to the integral, using \eqref{gra_bound} and the classical theorem on the interchange of integration and differentiation, we conclude that the formula \eqref{Formula_gradient_PHI} holds within a suitable neighborhood of $\bar{x}$. Furthermore, considering the integrand in \eqref{Formula_gradient_PHI} and the uniform boundedness in \eqref{gra_bound}, we can establish the local continuity of the gradient around $\bar{x}$. \qed

\section{Numerical Approximation of Probability Functions} 

\emph{Proof of Theorem \ref{thm:limitecontinuo}:} On the one hand, notice that for every $x \in X$, the following inequality holds:
 		\begin{align*}
 			 e_c(x,v) \leq1, \text{ for all } (c,v) \in \C \times \mathcal{S}^{m-1}.
 		\end{align*}
 	On the other hand, by \eqref{gra_bound} and a classical argument of compactness, we can ensure  that there exists $\hat{\kappa}>0$ and an open set $U' \supset X$ such that 
  \begin{align*}
       \|  \nabla_x e_c(x,v)\| \leq \hat{\kappa}, \text{ for all } (x, v , c ) \in U' \times \mathbb{S}^{m-1}\times \C.
  \end{align*}
  For $r_k$ a positive sequence converging to $0$ and arbitrary $\bar{x}  \in X$, define
 \begin{align*}
     \Delta_{1,k}(v,c)=\sup_{x \in \mathbb{B}_{r_k}(\bar{x})}|e_c(x,v)-e_c(\bar{x},v)|, \quad \text{ and } \quad  \Delta_{2,k}(v,c)=\sup_{x \in \mathbb{B}_{r_k}(\bar{x})}|\nabla_xe_c(x,v)-\nabla_x e_c(\bar{x},v)|.
 \end{align*}
  By Theorem \ref{theorem01}, we have that for $j=1,2$, as $k \to \infty$, $\Delta_{j,k}(v,c)\to 0$ $(\nuTheta \otimes \m)$ almost surely. Since $\Delta_{1,k}(v,c) \leq 2$ and $\Delta_{2,k}(v,c) \leq 2\hat{\kappa}$, the dominated convergence theorem implies that for $j=1,2$,
  $$
  \lim_{k\to \infty}\int_{ \C \times \mathbb{S}^{m-1} }\Delta_{j,k}(v,c) d(\nuTheta  \otimes \m)(v,c)=\int_{ \C \times \mathbb{S}^{m-1} }\lim_{k\to \infty}\Delta_{j,k}(v,c) d(\nuTheta  \otimes \m)(v,c)=0.
  $$
  By the Strong Law of Large Numbers \cite[Theorem 6.2]{Bill86} (or equivalently by \cite[Theorem 7.4]{robert_MCMC} when using a MCMC-type sampler), it follows that for fixed $k \in \mathbb{N}$ and $\bar{x} \in X$, as $N \to \infty$, almost surely both
  $$
  \sup_{x :||x-\bar{x}||\leq r_k}|\Phi_N(x)-\Phi_N(\bar{x})|\leq N^{-1}\sum_{i=1}^N \Delta_{1,k}(c_i,v_i) \to \mathbb{E}(\Delta_{1,k}),
  $$
  and 
  $$
  \sup_{x:||x-\bar{x}||\leq r_k}|\nabla\Phi_N(x)-\nabla\Phi_N(\bar{x})|\leq N^{-1}\sum_{i=1}^N \Delta_{2,k}(c_i,v_i) \to \mathbb{E}(\Delta_{2,k}).
  $$
  As the right-hand sides above converge to 0 as $k \to \infty$, for any $\varepsilon>0$, we can find $\tilde{k}$ such that 
  
  $$\sup_{|x-\bar{x}|<r_{\tilde{k}}}|\Phi_N(x)-\Phi_N(\bar{x})|\leq \varepsilon/3 \quad \text{and} \quad\sup_{|x-\bar{x}|<r_{\tilde{k}}}|\nabla\Phi_N(x)-\nabla\Phi_N(\bar{x})|\leq \varepsilon/3.
  $$
Since the sets $\mathbb{B}_{r_{\tilde{k}}}(\bar{x})$ for $\bar{x} \in X$ cover  $X$, the  compactness of $X$ implies that there exists a finite collection $\bar{x}_1, \ldots, \bar{x}_F$ and $K=\max\{\tilde{k}_1,\cdots,\tilde{k}_F\}$ such that $\mathbb{B}_{r_K}(\bar{x}_1), \ldots, \mathbb{B}_{r_K}(\bar{x}_1)$ cover $X$. Therefore, for $\bar{x} \in X$,  there exist indices $1\leq j_0\leq F$ and $1\leq j_1\leq F$ such that 
  $$|\Phi_N(\bar x)-\Phi_N(\bar{x}_{j_0})| \leq \sup_{|x-\bar{x}_{j_0}|<r_K}|\Phi_N(x)-\Phi_N(\bar{x}_{j_0})|\leq \varepsilon/3,$$
  and
  $$|\nabla\Phi_N(\bar x)-\nabla\Phi_N(\bar{x}_{j_1})| \leq  \sup_{|x-\bar{x}_{j_1}|<r_K}|\nabla\Phi_N(x)-\nabla\Phi_N(\bar{x}_{j_1})|\leq \varepsilon/3.$$
 The compactness of $X$ and continuity of $\Phi$ and its gradient ensure that, possibly enlarging $F$ and $K$, for each $\bar{x} \in X$,  there exists $1\leq j\leq F$ such that 
  $$
  \sup_{|x-\bar{x}_j|<r_K}|\Phi(x)-\Phi(\bar{x}_j)|\leq \varepsilon/3 \textrm{ and } \sup_{|x-\bar{x}_j|<r_K}|\nabla\Phi(x)-\nabla\Phi(\bar{x}_j)|\leq \varepsilon/3.
  $$
  For each $1\leq j\leq F$, $h(c,v)=N^{-1}\sum_{i=1}^Ne_{c_i}(\bar{x}_j,v_i)$ and $\tilde{h}(c,v)=N^{-1}\sum_{i=1}^N\nabla e_{c_i}(\bar{x}_j,v_i)$ converges almost surely to $\Phi(\bar{x}_j)$ and $\nabla \Phi(\bar{x}_j)$, respectively. Thus, we can find $\hat{k}$ such that $|\Phi_N(\bar{x}_j)-\Phi(\bar{x}_j)| \leq \varepsilon/3$ and $|\nabla\Phi_N(\bar{x}_j)-\nabla\Phi(\bar{x}_j)| \leq \varepsilon/3$ whenever $N \geq \hat{k}$ for every $j$. Finally, by the triangle inequality, it follows that for arbitrary $\varepsilon>0$, we have for $k>\max \{\hat{k},K\}$,
  $$
  \sup_{x \in X} |\Phi_N(x)-\Phi(x)| \leq \varepsilon \quad \text{and} \quad \sup_{x \in X} |\nabla\Phi_N(x)-\nabla\Phi(x)| \leq \varepsilon.
  $$
  Thus, the uniform convergence of $\Phi_N$ and $\nabla\Phi_N$ is proven.  \qed
\bibliographystyle{plain} 
\bibliography{bibliography}
\end{document}